\newtheorem{theorem}{Theorem}
\newtheorem{definition}{Definition}
\newtheorem{lemma}{Lemma}
\newtheorem{proposition}{Proposition}
\newtheorem{assumption}{Assumption}
\newtheorem{remark}{Remark}
\newtheorem{example}{Example}
\newtheorem{corollary}{Corollary}
\newcommand{\interior}[1]{%
  {\kern0pt#1}^{\mathrm{o}}%
}
\newcommand{\bbR}{\mathbb{R}}
\newcommand\ignore[1]{}
\newcommand\edits[1]{{#1}}
\def\tsc#1{\csdef{#1}{\textsc{\lowercase{#1}}\xspace}}
\begin{document}
\let\WriteBookmarks\relax
\def\floatpagepagefraction{1}
\def\textpagefraction{.001}

\shorttitle{On the Softplus Penalty for Constrained Convex Optimization}

\shortauthors{Li, Grigas, and Atamtürk}

\title [mode = title]{On the Softplus Penalty for Constrained Convex Optimization}                      

\tnotetext[0]{This research is supported, in part, by NSF AI Institute for Advances in Optimization Award 2112533, and DOD ONR grant 12951270.
}

\tnotemark[1]

\tnotetext[1]{Corresponding author
}

\author[1]{Meng Li}
\ead{meng_li@berkeley.edu}
\author[1]{Paul Grigas}
\cormark[1]
\ead{pgrigas@berkeley.edu}
\author[1]{Alper Atamtürk}
\ead{atamturk@berkeley.edu}

\affiliation[1]{organization={Department of Industrial Engineering and Operations Research, University of California-Berkeley},
    city={Berkeley},
    postcode={94720}, 
    state={CA},
    country={USA}}

\renewcommand{\printorcid}{}

\begin{abstract}
We study a new penalty reformulation of constrained convex optimization based on the softplus penalty function. We develop novel and tight upper bounds on the objective value gap and the violation of constraints for the solutions to the penalty reformulations by analyzing the solution path of the reformulation with respect to the smoothness parameter. We use these upper bounds to analyze the complexity of applying gradient methods, which are advantageous when the number of constraints is large, to the reformulation.
\end{abstract}

\begin{keywords}
convex optimization \sep penalty method \sep gradient method 
\end{keywords}

\maketitle

\section{Introduction}

Consider the convex optimization problem 
\begin{equation}\label{eq:Original Nonlinear}
\begin{aligned}
F^* := \quad \min_{x\in\mathbb{R}^n}\quad &F(x)=\frac1{\ell}\sum_{i=1}^{{\ell}}f_i(x)+\psi(x)\\
\mathrm{s.t.}\quad &a_i(x)\le 0, \  i=1,...,{m},
\end{aligned}
\end{equation}
where \(f_i: \mathbb{R}^n\to\mathbb{R}\), \(i=1, \ldots,{\ell},\) are convex, smooth functions, 
\(\psi:\mathbb{R}^n\to\mathbb{R}\) is a convex function for which one can build a proximal mapping, and $a_i: \mathbb{R}^n\to\mathbb{R}, \, i = 1, \ldots, {m},$ are convex, smooth functions. We are motivated to solve problems with numerous nonlinear constraints, which are common in practice. For example, strong convex relaxations of nonconvex optimization problems are often formulated using many SOCP or semidefinite constraints. Other examples include the minimum-volume covering ellipsoid problem \citep{sun2004computation} and quadratically-constrained quadratic programming.

In this paper, similar to \cite{li2022new}, which treats the special case with linear constraints, we construct a penalty reformulation of \eqref{eq:Original Nonlinear} utilizing the \emph{softplus} penalty function.  \cite{li2022new} establish upper bounds on the distance between the optimal solutions of the constrained problem and penalized problem, which are used to prove convergence results of the stochastic gradient algorithms for the constrained case. Their analysis for the linearly constrained case critically depends on the global upper bounds on norms of the gradients of the constraints. Unfortunately, as no such global upper bound exists for nonlinear constraints, the analysis does not extend to \eqref{eq:Original Nonlinear}.

Instead, here, we establish bounds on the objective value gap and the violations of constraints for the solutions to the penalty reformulations (Section~\ref{sec:penalty}). In particular, for the strongly convex case, we build bounds for violations of constraints by differentiating the trajectories of the optimal solutions for penalty reformulations. The major challenges of the analysis for nonlinear constraints are ($i$) the unboundedness of the norm of the gradients and ($ii$) the lack of twice continuous differentiability. We address these challenges by bounding the norms in a region close to the optimal solution based on a natural growth condition and with approximations by twice continuously differentiable functions. We present several examples to demonstrate the tightness of these bounds.
Given the bounds, we then establish the convergence rate for a simple penalty method that solves \eqref{eq:Original Nonlinear} by solving a static penalty reformulation with first-order deterministic and stochastic methods (Section \ref{sec:applications}). For example, in the $\mu$-strongly convex case we show that when applying catalyst-accelerated SVRG, we obtain a complexity proportional to 
\(\tilde O\left(m^{\frac54}/\sqrt{\mu\epsilon}\right)\) 
expected stochastic gradient iterations to obtain an \(\epsilon\)-approximate solution for a problem with $m$ constraints. We also briefly show how one can deal with non-smooth constraints by combining the approximations with our penalty method. The \emph{softplus} penalty has some other advantages. Due to the connection between \emph{softplus} penalty and Nesterov's smoothing technique \citep{nesterov2005smooth}, it allows one to obtain dual solutions when solving penalty reformulations. More advanced methods, including ones dynamically adjusting penalties through a nested structure and incorporating safe screening similar to \cite{li2022new} can also be developed based on the main results in Section~\ref{sec:penalty}.%

\edits{Various penalty methods have been considered in previous works for similar problems \citep{tatarenko2018smooth,nedic2020convergence,lan2013iteration,fercoq2019almost}, including linearly constrained problems, conically constrained problems, and problems with random linear constraints that hold almost surely.} In particular, \cite{mishchenko2018stochastic} consider a more general version of \eqref{eq:Original Nonlinear}, where the feasible region is defined as \(\cap_{i=1}^m X_i\), \(i=1,...,m\), where \(X_i\) are closed convex sets. They use the sum of squared distance functions \(\frac1{2m}\sum_{i=1}^md_{{X}_i}(x)^2\) to penalize the constraints. In Section \ref{sec:applications}, we show that compared to their results, our penalty method requires weaker assumptions, and our convergence rate has a different dependence on the problem parameters, which is better in most cases. 

This paper is organized as follows. In Section \ref{sec:penalty}, we introduce the penalty reformulation for the constrained problem \eqref{eq:Original Nonlinear} and study its properties, focusing on the estimations of violations of constraints for the solutions to the penalty reformulations. In Section \ref{sec:applications}, we apply these properties to obtain approximate solutions to \eqref{eq:Original Nonlinear} by solving penalty reformulations under different circumstances (Section~\ref{subsec:complexity}) and consider extensions to nonsmooth constrained problems (Section~\ref{subsec:approx constraints}).

\section{Penalty Function Reformulation}
\label{sec:penalty}
In this section, we present a smooth penalized reformulation of problem \eqref{eq:Original Nonlinear} and examine several of its useful theoretical properties. To penalize the constraints, we utilize the \textit{softplus} function: 
\begin{equation}\label{Softplus Penalty}
    p_{\delta}(t):=\delta\log(1+\exp(t/\delta)),\ (\delta,t)\in[0,+\infty)\times\mathbb{R},
\end{equation}
where $\delta \geq 0$ is a parameter controlling the smoothness of $p_{\delta}$ and \(p_0(t)=\max(0,t)\), by convention.
The function $p_\delta$ arises as a loss function in logistic regression, as an activation function in neural network modeling, and has been introduced as a penalty function for linearly constrained problems by \cite{li2022new}. The softplus penalty may also be viewed as an instance of Nesterov's smoothing technique~\citep{nesterov2005smooth} applied to the hinge function $\max(0,t)$, as described in \cite{li2022new}. \edits{Proposition 2.1 in \cite{li2022new} provides a list of useful properties of \(p_{\delta}\).} %

Let us now consider the penalty reformulation of \eqref{eq:Original Nonlinear}  
\begin{equation}\label{eq:Penalty Nonlinear}
    \min_{x \in \bbR^n} F_{\xi ,\delta}(x):=F(x)+\xi  \sum_{i=1}^{m} p_{\delta}(a_i(x)),
\end{equation}
where $\xi  \geq 0$ is the penalty parameter and $\delta \geq 0$ is the smoothness parameter of the softplus function. 
Let $x_{\xi ,\delta}^\ast$ denote an optimal solution of \eqref{eq:Penalty Nonlinear} for given $\xi$ and $\delta$, which is uniquely defined whenever $F$ is strongly convex.
We are interested in studying the relationship between an optimal solution $x^\ast$ for \eqref{eq:Original Nonlinear} and $x_{\xi ,\delta}^\ast$ as we vary the parameters $\xi $ and $\delta$. Towards this goal, throughout the paper, the following assumptions are made concerning problem \eqref{eq:Original Nonlinear}.
\begin{assumption}\label{assum:Assumption Nonlinear Obj}
The objective function of problem \eqref{eq:Original Nonlinear} satisfies the following properties:
\begin{enumerate}
\item $F$ is globally $\mu$-strongly convex for some $\mu \geq 0$, i.e., $F - \frac\mu2 \|\cdot\|_2^2$ is a convex function;
\item The component functions $f_i$, $i=1,...,{\ell}$, are convex and globally $L_f$-smooth for some $L_f \geq 0$, i.e.,  $\|\nabla f_i(x) - \nabla f_i(y)\|_2 \leq L_f\|x - y\|_2$ for all $x, y \in \bbR^n$;
\item The proximal term \(\psi\) is closed and convex.
\end{enumerate}
\end{assumption}
Note that we consider both the general convex case where $\mu = 0$ and the strongly convex case where $\mu > 0$.
Furthermore, we make the following assumptions on the constraints, \(a_i(x) \leq 0\), \(i=1,...,m\).
\begin{assumption}\label{assum:Assumption Nonlinear Constraints}
The constraints of problem \eqref{eq:Original Nonlinear} satisfy the following properties:
\begin{enumerate}
\item The constraint functions \(a_i\), \(i=1,...,m\), are convex and globally \(L_a\)-smooth for some \(L_a\ge0\);
\item There exist \(s>0\) and \(\hat x\in\mathbb{R}^n\) such that \(a_i(\hat x)\le -s\), \(i=1,...,m\);
\item There exist \(C_0>0\), \(C_1\ge0\),  so that \(\|\nabla a_i(x)\|_2^2\le C_0+C_1 |a_i(x)| \) for all \(x\in\bbR^{n}\), \(i=1,...,m\).
\end{enumerate}
\end{assumption}
The second assumption, the Slater condition, is made to guarantee that the KKT conditions hold at the optimum. The third assumption is a type of growth condition on the gradient and is satisfied for many types of convex functions, including linear and quadratic functions. Furthermore, an \(L_a\)-smooth convex function with a global lower bound $a_i(x) \geq a_i^\ast$ for all $x \in \bbR^n$ satisfies the assumption with \(C_1=2L_a\). %
We additionally use the notation $A(x) := (a_1(x),...,a_m(x))^T$ to refer to the vector of constraint functions throughout. 

The following lemma
shows that under Assumptions \ref{assum:Assumption Nonlinear Obj} and \ref{assum:Assumption Nonlinear Constraints}, given a large enough \(\xi\), any solution of the penalty reformulation \eqref{eq:Penalty Nonlinear} at \(\delta=0\), \(x_{\xi ,0}^*\), is a solution of the original problem \eqref{eq:Original Nonlinear}. In particular, the lemma shows that a sufficiently large value of $\xi$ is related to the $\infty$-norm of a dual multiplier vector $\lambda \in \bbR^m$. We use \(\Lambda^\ast\) to denote the set of optimal dual solutions of the Lagrangian dual of \eqref{eq:Original Nonlinear}, i.e., the set of dual multiplier vectors corresponding to the KKT conditions. %
\begin{lemma}\label{lemma:0 point nonlinear}
Define \(\bar{\xi} := \inf_{\lambda^\ast \in \Lambda^\ast}\|\lambda^\ast\|_\infty\). Then, $\bar{\xi}$ is finite and it holds that \(x_{\xi ,0}^*\) is an optimal solution to problem \eqref{eq:Original Nonlinear} for all \(\xi > \bar{\xi}\). Furthermore, in the strongly convex case ($\mu > 0$), it holds that \(x_{\xi ,0}^*\) is the unique optimal solution to problem \eqref{eq:Original Nonlinear} for all \(\xi \ge \bar{\xi}\).
\end{lemma}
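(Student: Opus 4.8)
The plan is to exploit the exact-penalty structure of the $\delta=0$ reformulation, in which $p_0(a_i(x))=\max(0,a_i(x))$, and to tie the penalty parameter $\xi$ to an optimal Lagrange multiplier through the KKT conditions. First I would record that the Slater condition (Assumption~\ref{assum:Assumption Nonlinear Constraints}, item~2) guarantees that the set $\Lambda^\ast$ of optimal dual multipliers is nonempty, convex, and bounded; being also closed (as the maximizer set of the concave dual), it is compact, so $\lambda\mapsto\|\lambda\|_\infty$ attains its infimum on $\Lambda^\ast$. This simultaneously yields $\bar\xi<\infty$ (from nonemptiness) and the existence of $\lambda^\circ\in\Lambda^\ast$ with $\|\lambda^\circ\|_\infty=\bar\xi$, which I will need at the boundary value $\xi=\bar\xi$.

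The engine of the proof is a single chain of inequalities. Writing $L(x,\lambda):=F(x)+\sum_{i=1}^m\lambda_i a_i(x)$ for the Lagrangian and using $\lambda^\circ$, I would split $\xi\max(0,a_i(x))=\lambda_i^\circ\max(0,a_i(x))+(\xi-\lambda_i^\circ)\max(0,a_i(x))$ and use $\lambda_i^\circ\le\bar\xi$ together with $\max(0,a_i(x))\ge a_i(x)$ to obtain, for every $x\in\bbR^n$ and every $\xi\ge\bar\xi$,
\begin{equation*}
F_{\xi,0}(x)=F(x)+\xi\sum_{i=1}^m\max(0,a_i(x))\ge L(x,\lambda^\circ)+(\xi-\bar\xi)\sum_{i=1}^m\max(0,a_i(x))\ge F^\ast+(\xi-\bar\xi)\sum_{i=1}^m\max(0,a_i(x)).
\end{equation*}
Here the last bound uses that KKT stationarity, $0\in\partial F(x^\ast)+\sum_i\lambda_i^\circ\nabla a_i(x^\ast)$, makes $x^\ast$ a global minimizer of the convex function $L(\cdot,\lambda^\circ)$, while complementary slackness gives $L(x^\ast,\lambda^\circ)=F(x^\ast)=F^\ast$. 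Since $x^\ast$ is feasible, $F_{\xi,0}(x^\ast)=F(x^\ast)=F^\ast$, so $\min_x F_{\xi,0}(x)=F^\ast$ for all $\xi\ge\bar\xi$.

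For the strict regime $\xi>\bar\xi$, I would evaluate the display at $x=x_{\xi,0}^\ast$ and combine it with $F_{\xi,0}(x_{\xi,0}^\ast)\le F_{\xi,0}(x^\ast)=F^\ast$; the strictly positive factor $(\xi-\bar\xi)$ then forces $\sum_i\max(0,a_i(x_{\xi,0}^\ast))=0$, i.e.\ $x_{\xi,0}^\ast$ is feasible, whereupon $F_{\xi,0}(x_{\xi,0}^\ast)=F(x_{\xi,0}^\ast)=F^\ast$ proves optimality for \eqref{eq:Original Nonlinear}. For the strongly convex boundary case $\xi=\bar\xi$ with $\mu>0$, the strict-gap argument degenerates, so I would instead use uniqueness: $F_{\xi,0}$ is $\mu$-strongly convex (a $\mu$-strongly convex $F$ plus a convex penalty) and hence has a unique minimizer; since $\min_x F_{\xi,0}=F^\ast$ and the feasible optimum $x^\ast$ attains $F_{\xi,0}(x^\ast)=F^\ast$, the point $x^\ast$ is that unique minimizer, giving $x_{\xi,0}^\ast=x^\ast$.

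The main obstacle is organizing the two regimes around the single chain inequality and correctly justifying its middle step when $\psi$ is nonsmooth, namely that Slater-guaranteed KKT stationarity identifies $x^\ast$ as an \emph{unconstrained} minimizer of $L(\cdot,\lambda^\circ)$ via $0\in\partial F(x^\ast)+\sum_i\lambda_i^\circ\nabla a_i(x^\ast)$. The secondary subtlety is the boundary value $\xi=\bar\xi$, where the slack term vanishes; handling it requires both compactness of $\Lambda^\ast$ (to attain $\bar\xi$) and strong convexity (to force uniqueness), which is precisely why the general claim needs $\xi>\bar\xi$ while the strongly convex claim reaches down to $\xi=\bar\xi$.
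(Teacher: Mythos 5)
Your proof is correct, and it is organized somewhat differently from the paper's. The paper splits the lemma into two regimes handled by different tools: for $\mu>0$ and $\xi\ge\bar\xi$ it verifies the subdifferential optimality certificate $0\in\partial F(x^*)+\xi\sum_i\partial_x p_0(a_i(x^*))=\partial_x F_{\xi,0}(x^*)$ directly, using $\partial_t p_0(0)=[0,1]$ to absorb the multipliers $\lambda_i^*\le\xi$, and then invokes uniqueness of the minimizer of the strongly convex $F_{\xi,0}$; for $\mu=0$ and $\xi>\bar\xi$ it defers entirely to Proposition~\ref{prop:nonstrongly convex estimation}. You instead run a single self-contained exact-penalty chain $F_{\xi,0}(x)\ge L(x,\lambda^\circ)+(\xi-\bar\xi)\sum_i\max(0,a_i(x))\ge F^*+(\xi-\bar\xi)\sum_i\max(0,a_i(x))$, which is essentially the $\delta=0$ specialization of the computation inside Proposition~\ref{prop:nonstrongly convex estimation}, and then read off both regimes from it. The two certificates are equivalent in substance (both rest on KKT stationarity, complementary slackness, and $\max(0,t)\ge t$), but your version buys two things: it avoids the forward reference to a later proposition, and it explicitly justifies that the infimum defining $\bar\xi$ is \emph{attained} (compactness of $\Lambda^*$ under Slater), a point the paper leaves implicit even though it is genuinely needed at the boundary value $\xi=\bar\xi$ in the strongly convex claim. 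One cosmetic caveat: like the paper, your argument for the $\mu=0$, $\xi>\bar\xi$ case presupposes that a minimizer $x_{\xi,0}^*$ of $F_{\xi,0}$ exists, which is built into the lemma's notation rather than proved.
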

\begin{proof}
First consider the case of \(\mu > 0\) and where \(\xi \geq \bar{\xi}\). By Slater condition (Assumption \ref{assum:Assumption Nonlinear Constraints}.2), the Lagrangian necessary conditions hold, i.e., there exists \(\lambda^*\in\bbR_{+}^{m}\) such that 
    \[0\in\partial F(x^*)+\sum_{i=1}^{m}\lambda_i^* \nabla a_i(x^*) ,\]
    and $a_i(x) = 0$ for all $i$ with $\lambda_i^* > 0$. Hence, $\Lambda^\ast$ is non-empty and $\bar{\xi}$ is finite.
    Given \(\bar{\xi} = \inf_{\lambda^\ast \in \Lambda^\ast}\|\lambda^\ast\|_\infty\), since $\partial_x p_0(a_i(x)) = \partial_t p_{0}(a_i(x))\nabla a_i(x)$ and \(\partial_t p_{0}(0)=[0,1]\), when \(\xi \ge \bar{\xi}\) it holds that
    \[0\in \partial F(x^*)+\xi \sum_{i=1}^{m}\partial_x p_0(a_i(x^*))=\partial_x F_{\xi ,0}(x^*).\]
    Thus, \(x^*\) is the unique solution of \eqref{eq:Penalty Nonlinear}.
The case of \(\mu=0\) and \(\xi > \bar{\xi}\) follows as a corollary of Proposition~\ref{prop:nonstrongly convex estimation} (the proof of which does not rely on this lemma).
\end{proof}

Notice that the lower bound on \(\xi \) in Lemma \ref{lemma:0 point nonlinear} is based on the minimal infinity norm of a dual optimal solution $\lambda^\ast$. \cite{mishchenko2018stochastic} present a bound on their penalty parameter based on an assumption that is close to the definition of Hoffman's constant \citep{guler1995approximations}. Although estimating both bounds may be difficult in some situations, we expect that our bound on the penalty parameter \(\bar \xi\) may be easier to estimate in practice since it relies directly on dual information and approximate dual solutions can be generated by an algorithm.  Furthermore, as a local property specific to a given problem instance, the norm of the dual solution may provide a better bound than Hoffman-type constants, which are defined based on the global properties of the feasible region across all instances.

\subsection{Bounds for the Penalty Reformulation}
In this subsection, our main goal is to estimate how well solutions of the penalty reformulations \eqref{eq:Penalty Nonlinear} approximate solutions of the original problem \eqref{eq:Original Nonlinear}. In the results below and throughout the paper, the \(O\) notation hides universal constants, and the \(\tilde O\) notation hides universal constants and logarithmic dependencies on the problem parameters. We define the following notion of $q$-norm approximate solutions of problem \eqref{eq:Original Nonlinear}, where $q$-norm is the standard $\ell_q$-norm on $\bbR^n$.

\begin{definition}\label{def:approx sol}
The point \(\tilde x\) is an \(\left(\epsilon_A,\epsilon_F\right)\)-approximate solution of \eqref{eq:Original Nonlinear} with respect to the $q$-norm if 
\begin{equation}\label{eq:approx sol}
\|\bar A(\tilde {x})\|_q \le \epsilon_A \text{ and } F(\tilde x) - F^\ast \le \epsilon_F,    
\end{equation}
where \(\bar A (\tilde{x}) =(\max(0,a_1(\tilde x)),...,\max(0,a_m(\tilde x)))^T\).
\end{definition}

For constrained problems like \eqref{eq:Original Nonlinear}, the norm of violations of constraints is an important measure to assess the approximation quality of a candidate solution $\tilde{x}$.
In our results, we consider the case of $q = 1$, as well as the case of $q = 2$, which leads to tighter bounds in the strongly convex case. 
First, Proposition \ref{prop:function value gap nonlinear} bounds the gap in objective value \(F(x_{\xi,\delta}^*)-F^*\). 

\begin{proposition}\label{prop:function value gap nonlinear}
For $\delta \geq 0$ and \(\xi > \bar \xi\), it holds that
\begin{equation*}
    - \xi\|\bar{A}(x_{\xi,\delta}^*)\|_1 \leq F(x_{\xi,\delta}^*) - F^\ast \leq m\xi\delta\log2.
\end{equation*}
Furthermore, in the strongly convex case ($\mu > 0$), the above inequality holds for \(\xi = \bar \xi\) as well.
\end{proposition}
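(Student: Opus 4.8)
The plan is to treat the two inequalities separately: the upper bound follows from the optimality of \(x_{\xi,\delta}^*\) for the penalized objective together with two elementary properties of softplus, while the lower bound follows from Lagrangian duality and a careful choice of dual multiplier.

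For the upper bound, I would begin with the defining optimality inequality \(F_{\xi,\delta}(x_{\xi,\delta}^*) \le F_{\xi,\delta}(x^*)\), where \(x^*\) is an optimal solution of \eqref{eq:Original Nonlinear}. Expanding both sides through \eqref{eq:Penalty Nonlinear} gives
\[
F(x_{\xi,\delta}^*) + \xi\sum_{i=1}^m p_\delta(a_i(x_{\xi,\delta}^*)) \le F^\ast + \xi\sum_{i=1}^m p_\delta(a_i(x^*)).
\]
Two facts about \(p_\delta\) close this direction. First, \(p_\delta \ge 0\), so the entire penalty term on the left may be discarded. Second, since \(x^*\) is feasible we have \(a_i(x^*) \le 0\), and for any nonpositive argument \(p_\delta(t) = \delta\log(1+\exp(t/\delta)) \le \delta\log 2\) (and \(p_0(t)=0\) for \(t\le 0\)). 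Summing over the \(m\) constraints yields \(F(x_{\xi,\delta}^*) - F^\ast \le m\xi\delta\log 2\). I note that this half of the proposition requires neither \(\xi > \bar\xi\) nor strong convexity.

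For the lower bound, I would invoke Lagrangian duality. Under the Slater condition (Assumption \ref{assum:Assumption Nonlinear Constraints}.2), strong duality holds and \(\Lambda^\ast\) is nonempty, so any \(\lambda^* \in \Lambda^\ast\) satisfies \(F^\ast = \min_{x}\{F(x) + \sum_{i=1}^m \lambda_i^* a_i(x)\}\). Evaluating the minimand at \(x_{\xi,\delta}^*\) and rearranging gives
\[
F(x_{\xi,\delta}^*) - F^\ast \ge -\sum_{i=1}^m \lambda_i^* a_i(x_{\xi,\delta}^*).
\]
Because \(\lambda_i^* \ge 0\) and \(a_i \le \max(0,a_i)\), each term obeys \(\lambda_i^* a_i(x_{\xi,\delta}^*) \le \lambda_i^* \max(0, a_i(x_{\xi,\delta}^*))\); bounding \(\lambda_i^* \le \|\lambda^*\|_\infty\) and summing produces \(F(x_{\xi,\delta}^*) - F^\ast \ge -\|\lambda^*\|_\infty\,\|\bar A(x_{\xi,\delta}^*)\|_1\). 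It then suffices to choose \(\lambda^*\) with \(\|\lambda^*\|_\infty \le \xi\).

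This final selection is where the hypotheses on \(\xi\) enter, and I expect it to be the only nonroutine step. When \(\xi > \bar\xi = \inf_{\lambda^* \in \Lambda^\ast}\|\lambda^*\|_\infty\), the definition of the infimum immediately supplies some \(\lambda^* \in \Lambda^\ast\) with \(\|\lambda^*\|_\infty < \xi\), and the bound follows. The delicate case is the boundary value \(\xi = \bar\xi\) asserted in the strongly convex regime, where I would need the infimum defining \(\bar\xi\) to be \emph{attained} so that a multiplier with \(\|\lambda^*\|_\infty = \bar\xi\) exists. I would obtain this from the compactness of \(\Lambda^\ast\): under Slater the optimal dual set is closed and bounded, hence the continuous map \(\lambda \mapsto \|\lambda\|_\infty\) attains its minimum over \(\Lambda^\ast\). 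Since \(\mu > 0\) guarantees that \(x_{\xi,\delta}^*\) is well defined, both bounds then extend to \(\xi = \bar\xi\). The attainment argument is the main obstacle; the softplus estimates and the duality identity are otherwise straightforward.
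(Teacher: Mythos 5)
Your proof is correct, and your upper bound argument is exactly the paper's: compare $F_{\xi,\delta}(x_{\xi,\delta}^*)\le F_{\xi,\delta}(x^*)$, drop the nonnegative penalty on the left, and use $p_\delta(t)\le\delta\log 2$ for $t\le 0$. Your lower bound, however, takes a genuinely different route. The paper deduces it from Lemma~\ref{lemma:0 point nonlinear}: since $x^*$ minimizes the exact ($\delta=0$) penalty function $F_{\xi,0}$ for $\xi\ge\bar\xi$, one gets $F^*=F_{\xi,0}(x^*)\le F_{\xi,0}(x_{\xi,\delta}^*)=F(x_{\xi,\delta}^*)+\xi\|\bar A(x_{\xi,\delta}^*)\|_1$ in one line. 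You instead bypass Lemma~\ref{lemma:0 point nonlinear} entirely and argue from the dual side: $F^*=\min_x L(x,\lambda^*)\le L(x_{\xi,\delta}^*,\lambda^*)$, then bound $\sum_i\lambda_i^* a_i\le\|\lambda^*\|_\infty\|\bar A\|_1$ and select a multiplier with $\|\lambda^*\|_\infty\le\xi$. The two arguments are dual views of the same exact-penalty fact (the paper's Lemma~\ref{lemma:0 point nonlinear} is itself proved via the KKT multipliers), so neither is substantively stronger, but yours is self-contained and, notably, more careful at the boundary case $\xi=\bar\xi$: you explicitly justify attainment of the infimum defining $\bar\xi$ via compactness of $\Lambda^*$ under Slater, a point the paper's Lemma~\ref{lemma:0 point nonlinear} glosses over when it asserts $0\in\partial_x F_{\bar\xi,0}(x^*)$. (A minimizing-sequence limit would also dispense with attainment altogether, since the bound $F(x_{\xi,\delta}^*)-F^*\ge-\|\lambda^{(k)}\|_\infty\|\bar A(x_{\xi,\delta}^*)\|_1$ passes to the limit.) What the paper's route buys is economy, reusing a lemma already needed elsewhere; what yours buys is independence from that lemma and an explicit treatment of the attainment issue.
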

\begin{proof}

The first inequality follows from Lemma \ref{lemma:0 point nonlinear}, in particular \(F_{\xi,0}(x^*)\le F_{\xi,0}(x_{\xi,\delta}^*)\). The second follows from \(F(x_{\xi,\delta}^*) \le F_{\xi,\delta}(x_{\xi,\delta}^*)\le F_{\xi,\delta}(x^*)\le F(x^*)+m\xi\delta\log 2\), where the last inequality is by the fifth property in 
\edits{Proposition 2.1 of \cite{li2022new}.} %
\end{proof}

Next, we bound the violation of constraints for the optimal solutions to the penalty reformulations. Here, Proposition \ref{prop:nonstrongly convex estimation} shows a simple result without strong convexity. 

\begin{proposition}\label{prop:nonstrongly convex estimation}
Consider the general convex case ($\mu = 0$) in the first part of Assumption \ref{assum:Assumption Nonlinear Obj}. Let \(X^*\) denote the set of optimal solutions to problem \eqref{eq:Original Nonlinear}, let \(E=\{i\in\{1,...,m\}: a_i(x^*)=0,\ \forall x^*\in X^*\}\) be the set of indices of active constraints for all \(x^*\in X^*\), 
let \(m^A\) be the cardinality of \(E\), and \(I = \{1,...,m\}/E\). Then, if \(\xi>\bar \xi\), for any \(\hat x\in\mathbb{R}^n\) it holds that
\[\begin{aligned}
(\xi - \bar{\xi})\|\bar{A}(\hat{x})\|_1 &\leq \xi \sum_{i\in I} \max(0,a_i(\hat x)) \\ 
& \ \ \ \ +(\xi-\bar \xi)\sum_{i\in E}\max(0,a_i(\hat x))\\
&\le F_{\xi,\delta}(\hat x)- F_{\xi,\delta}(x^*) + m\xi\delta\log2.
\end{aligned}\]
\end{proposition}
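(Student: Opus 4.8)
The plan is to split the claim into its two inequalities and reduce the harder second one to a single statement about the Lagrangian at a dual-optimal multiplier. The first inequality is immediate: since $\bar\xi\ge 0$ we have $\xi-\bar\xi\le\xi$, so comparing the two sides term by term over $i\in I$ (using $\max(0,a_i(\hat x))\ge 0$), together with the decomposition
\[
\|\bar A(\hat x)\|_1=\sum_{i\in I}\max(0,a_i(\hat x))+\sum_{i\in E}\max(0,a_i(\hat x)),
\]
yields it at once. The work is entirely in the second inequality.

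For the second inequality I would first dispose of the smoothing parameter $\delta$ using the elementary softplus sandwich $\max(0,t)\le p_\delta(t)\le \max(0,t)+\delta\log 2$ recorded in Proposition~2.1 of \cite{li2022new}. Applying the lower bound termwise at $\hat x$ and the upper bound at the fixed optimum $x^*$ (which is feasible, so each $\max(0,a_i(x^*))=0$, and $F(x^*)=F^*$) gives
\[
F_{\xi,\delta}(\hat x)-F_{\xi,\delta}(x^*)+m\xi\delta\log 2\ \ge\ F(\hat x)-F^*+\xi\|\bar A(\hat x)\|_1.
\]
Substituting this lower bound for the right-hand side of the target and cancelling the common $\xi\sum_{i\in I}\max(0,a_i(\hat x))$ term together with the matching $E$-terms, the entire claim collapses to the single $\delta$-free inequality
\[
F(\hat x)-F^*+\bar\xi\sum_{i\in E}\max(0,a_i(\hat x))\ \ge\ 0 \quad\text{for every }\hat x\in\bbR^n.
\]

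To establish this reduced inequality I would argue through a dual-optimal multiplier. Slater's condition (Assumption~\ref{assum:Assumption Nonlinear Constraints}.2) makes $\Lambda^*$ nonempty and strong duality hold, so for any $\lambda^*\in\Lambda^*$ the dual function attains $F^*$, i.e. $F(\hat x)+\sum_{i=1}^m\lambda_i^* a_i(\hat x)\ge F^*$ for all $\hat x$. The key observation is that $\lambda_i^*=0$ for every $i\in I$: by definition $i\in I$ means $i\notin E$, so some primal optimum $x^*_i\in X^*$ has $a_i(x^*_i)<0$, and complementary slackness for the optimal pair $(x^*_i,\lambda^*)$ forces $\lambda_i^*=0$. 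Thus the sum restricts to $i\in E$; bounding $a_i(\hat x)\le \max(0,a_i(\hat x))$ and $\lambda_i^*\le\|\lambda^*\|_\infty$ gives $F(\hat x)-F^*\ge -\|\lambda^*\|_\infty\sum_{i\in E}\max(0,a_i(\hat x))$, and taking the infimum over $\lambda^*\in\Lambda^*$ replaces $\|\lambda^*\|_\infty$ by $\bar\xi$.

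I expect the main obstacle to be precisely this last passage from an arbitrary $\lambda^*$ to $\bar\xi=\inf_{\lambda^*\in\Lambda^*}\|\lambda^*\|_\infty$. One must check that the vanishing of multipliers on $I$ holds for \emph{every} dual optimal solution, so that the reduction to the index set $E$ is uniform along an infimizing sequence, and, when the infimum is not attained, pass to the limit along $\lambda^{*,k}$ with $\|\lambda^{*,k}\|_\infty\to\bar\xi$. This limit is legitimate because $\sum_{i\in E}\max(0,a_i(\hat x))\ge 0$ is a fixed nonnegative quantity, so the bound $F(\hat x)-F^*\ge -\|\lambda^{*,k}\|_\infty\sum_{i\in E}\max(0,a_i(\hat x))$ survives the limit and yields the desired inequality. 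Everything else is bookkeeping with the softplus bounds and the $1$-norm decomposition; note also that this argument is self-contained and does not invoke Lemma~\ref{lemma:0 point nonlinear}, as required since that lemma's $\mu=0$ case relies on the present proposition.
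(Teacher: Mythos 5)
Your proof is correct and follows essentially the same route as the paper's: both arguments hinge on the minimal-$\infty$-norm dual multiplier, the vanishing of multipliers on $I$ via complementary slackness, and the softplus sandwich $\max(0,t)\le p_\delta(t)\le\max(0,t)+\delta\log 2$. The only difference is packaging: where you invoke strong duality to write $F(\hat x)+\sum_i\lambda_i^* a_i(\hat x)\ge F^*$ directly, the paper derives the same inequality from the KKT stationarity condition $0\in\partial F(x^*)+\sum_i\lambda_i^*\nabla a_i(x^*)$ followed by convexity of the $a_i$; your extra care about an infimizing sequence when $\bar\xi$ is not attained is a harmless refinement (the infimum is in fact attained since $\Lambda^*$ is closed).
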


\begin{proof}
Define $\Delta_{\xi, \delta}^F := F_{\xi,\delta}(\hat x) - F_{\xi,\delta}(x^*)$ and let \(\lambda^*\in \arg\min_{\lambda\in \Lambda^*} \|\lambda\|_{\infty}\). Then, we have
\[\begin{aligned}
&\Delta_{\xi, \delta}^F = F(\hat x) - F(x^*) + \xi\sum_{i=1}^m (p_{\delta}(a_i(\hat x))-p_{\delta}(a_i(x^*)))\\
&\ge  -\sum_{i\in E}\lambda_i^* \nabla a_i(x^*)^T(\hat x-x^*) +\xi\sum_{i=1}^m (p_{\delta}(a_i(\hat x))-p_{\delta}(a_i(x^*)))\\
&\ge -\sum_{i\in E}\lambda_i^* (a_i(\hat x)-a_i(x^*))+\xi\sum_{i=1}^m (p_{\delta}(a_i(\hat x))-p_{\delta}(a_i(x^*)))\\
&\ge \xi \sum_{i\in I} \max(0,a_i(\hat x))+(\xi-\|\lambda^*\|_{\infty})\sum_{i\in E}\max(0,a_i(\hat x))\\
&\ \ \ \ -m\xi\delta\log2,
\end{aligned}\]
where the first inequality comes from the KKT condition \(0\in \partial F(x^*) + \sum_{i=1}^m\lambda_i^* \nabla a_i(x^*)\), the second comes from the convexity of \(a_i\), and the third comes from the first and fifth properties of \(p_\delta\) listed in Proposition 2.1 of \cite{li2022new}.
\end{proof}

\edits{The following example shows that the bounds on the function value gap and the 1-norm of constraint violations are tight for the general convex setting. 
\begin{example}\label{example:entrywise2}
Consider the following problem:
\[
\begin{aligned}
\min_{x\in\mathbb{R}^n}\quad&F(x)=\sum_{i=1}^m x_i\\
s.t.\quad & e_i^Tx\ge0,\ i=1,...,m,
\end{aligned}
\]
where \(m\le n\). Then, $\bar{\xi} = 1$, and for \(\xi\in(1,2)\) and \(\delta>0\), \(x_{\xi,\delta}^*\), the solution to the penalized problem 
\[\min_{x}\quad \sum_{i=1}^m x_i+\xi\sum_{i=1}^m\delta \log(1+\exp(-x_i/\delta))\]
satisfies \(x_{\xi,\delta,i}^* = \delta\log (\xi-1)\), \(i=1,...,m\). Hence, 
the 1-norm of constraint violations satisfies
$\|\bar{A}(x_{\xi,\delta}^*)\|_1 = m\delta\log (\xi-1) = \Theta( m \delta)$
and the objective function gap satisfies \(F(x_{\xi,\delta}^*) - F(x^\ast) = m\delta\log (\xi-1) = \Theta(m\delta)\).
\end{example}}

Under the assumption of strong convexity of $F$, we establish our main result in Theorem \ref{thm:estimation nonlinear new z infty m 01}. Theorem \ref{thm:estimation nonlinear new z infty m 01} provides a stronger and more applicable bound on the 2-norm of the constraint violations at \(x_{\xi,\delta}^*\). Combined with Proposition \ref{prop:function value gap nonlinear}, Theorem \ref{thm:estimation nonlinear new z infty m 01} shows that \(x_{\xi,\delta}^*\) is a \(\tilde O(m\delta)\) solution to the original problem simultaneously in terms of both objective function value gap and violations of the constraints.
\begin{theorem}\label{thm:estimation nonlinear new z infty m 01}
Suppose \(\mu>0\). Let \({U} = 2 C_1\|A(x^*)\|_{\infty}+2m C_0\) and let \(\xi \ge \bar \xi\) be given. Then, for all \(\delta\in [0,\frac{ {U}\xi }{\mu}\exp(-2)]\) satisfying $2mC_1\delta\log\left(\tfrac{{U}\xi}{\mu\delta}\right)\le {U}$, it holds that
\[\|\bar A(x_{\xi ,\delta}^*)\|_2 \leq \|A(x_{\xi ,\delta}^*)-A(x^*)\|_2\le \sqrt{m}\delta\log\left(\tfrac{{U}\xi }{\mu\delta}\right).\]
\end{theorem}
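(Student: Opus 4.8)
The plan is to separate the claim into the elementary first inequality and the substantive second inequality, and to prove the latter by analyzing the solution path $\delta\mapsto z(\delta):=x_{\xi,\delta}^*$ at fixed $\xi$. The first inequality is immediate from feasibility of $x^*$: since $a_i(x^*)\le0$ for every $i$, a componentwise check gives $\max(0,a_i(z(\delta)))\le|a_i(z(\delta))-a_i(x^*)|$, and summing squares yields $\|\bar A(z(\delta))\|_2\le\|A(z(\delta))-A(x^*)\|_2$. For the second inequality I set $v(\delta):=A(z(\delta))-A(x^*)$ and recall from Lemma~\ref{lemma:0 point nonlinear} that $z(0)=x^*$ in the strongly convex case with $\xi\ge\bar\xi$, so $v(0)=0$; the goal then reduces to the \emph{a priori} estimate $\|v(\delta)\|_2\le\sqrt m\,\delta\log(U\xi/(\mu\delta))$, which I would obtain by integrating a differential inequality for $\|v\|_2$ along the path.

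To justify differentiating the path I first reduce to $C^2$ data: the softplus $p_\delta$ is smooth in $t$ for $\delta>0$, but the $a_i$ are only $L_a$-smooth and $\psi$ may be nonsmooth, so I would approximate the non-$C^2$ ingredients ($\psi$ and the $a_i$) by $C^2$ functions, preserving strong convexity, convexity, and the constants $\mu,C_0,C_1$, and pass to the limit at the end (the final bound depends only on these constants, $m$, and $\|A(x^*)\|_\infty$, all stable under the approximation). For $C^2$ data, differentiating the stationarity condition $\nabla F(z(\delta))+\xi\sum_i p_\delta'(a_i(z(\delta)))\nabla a_i(z(\delta))=0$ in $\delta$ gives, by the implicit function theorem, $z'(\delta)=\xi H^{-1}J_A^{T}c$, where $J_A(z)$ is the Jacobian of $A$ with rows $\nabla a_i(z)^{T}$, $H=\nabla^2F+\xi J_A^{T}DJ_A+\xi\sum_i p_\delta'(a_i)\nabla^2a_i$ is the Hessian of the penalized objective, $D=\mathrm{diag}(p_\delta''(a_i))\succeq0$, and $c_i=-\partial_\delta p_\delta'(a_i)=\tfrac{a_i}{\delta^2}\sigma'(a_i/\delta)$ with $\sigma$ the standard sigmoid and $p_\delta'=\sigma(\cdot/\delta)$. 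Strong convexity of $F$ together with convexity of the $a_i$ yields the structural bound $H\succeq\mu I+\xi J_A^{T}DJ_A$, and the chain rule gives $v'(\delta)=J_A z'(\delta)=\xi J_A H^{-1}J_A^{T}c$.

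The crux is to bound $\|v'(\delta)\|_2$ by an integrable quantity that produces the logarithm. Writing $P:=(\xi D)^{1/2}J_A$ and $w:=A(z)/\delta$ (so $c=Dw$), one has the clean decomposition $v'=D^{-1/2}(PH^{-1}P^{T})D^{1/2}w$, and the structural bound above gives $0\preceq PH^{-1}P^{T}\preceq I$. The remaining factors are controlled by the exponential decay of $\sigma'$: the entries of $D$ and $w$ are such that the $1/\delta$ singularity in $c$ is only felt through the (nearly) active constraints, whose scaled values $a_i(z)/\delta$ stay of order $\log(U\xi/(\mu\delta))$, which is exactly what converts the $1/\delta$ into a logarithm. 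All of these estimates require bounding $\|\nabla a_i(z(\delta))\|_2^2$, for which I invoke the growth condition of Assumption~\ref{assum:Assumption Nonlinear Constraints}.3, namely $\|\nabla a_i(z)\|_2^2\le C_0+C_1|a_i(z)|$; combined with $|a_i(z)|\le\|A(x^*)\|_\infty+\|v(\delta)\|_2$, this shows that whenever $\|v(\delta)\|_2\le\sqrt m\,\delta\log(U\xi/(\mu\delta))$ holds one has $\max_i\|\nabla a_i(z(\delta))\|_2^2\le U$. Here the two hypotheses on $\delta$ enter: $\delta\le\tfrac{U\xi}{\mu}e^{-2}$ makes $\log(U\xi/(\mu\delta))\ge2$, so the target derivative is positive, while $2mC_1\delta\log(U\xi/(\mu\delta))\le U$ ensures the violation term does not push the gradient bound past $U$.

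The argument is therefore a continuation/bootstrap. On the maximal subinterval of $[0,\delta]$ on which $\max_i\|\nabla a_i(z)\|_2^2\le U$ holds, the pointwise estimate yields $\|v'(s)\|_2\le\sqrt m(\log(U\xi/(\mu s))-1)$; integrating from $0$ with $v(0)=0$ and $\int_0^\delta\log(U\xi/(\mu s))\,ds=\delta\log(U\xi/(\mu\delta))+\delta$ gives $\|v(\delta)\|_2\le\sqrt m\,\delta\log(U\xi/(\mu\delta))$, which in turn keeps the gradients within $U$, so the good subinterval is all of $[0,\delta]$ and the bound holds on the stated range. I expect the \emph{main obstacle} to be the pointwise derivative estimate: extracting the clean bound $\|v'\|_2\le\sqrt m(\log(U\xi/(\mu\delta))-1)$ requires simultaneously cancelling the $1/\delta$ blow-up of $c$ against $H^{-1}$, obtaining the dimension dependence $\sqrt m$ rather than $m$, and keeping the estimate self-consistent with the gradient bound $U$ used to derive it; the bootstrap must therefore be arranged with enough slack (reflected in the factor $2$ in $U$ and in the hypothesis $2mC_1\delta\log(U\xi/(\mu\delta))\le U$) for the loop to close.
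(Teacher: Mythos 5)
Your architecture coincides with the paper's: reduce to twice continuously differentiable data by smooth approximation, differentiate the stationarity condition in $\delta$ to obtain an ODE for the solution path, use the growth condition of Assumption~\ref{assum:Assumption Nonlinear Constraints}.3 to bound the constraint gradients by $U$, run a continuation/bootstrap argument on the maximal interval where that bound holds, and integrate $\sqrt{m}\left(\log\left(\tfrac{U\xi}{\mu s}\right)-1\right)$ to recover $\sqrt{m}\,\delta\log\left(\tfrac{U\xi}{\mu\delta}\right)$. The role of the two hypotheses on $\delta$, the closing of the bootstrap via $2\sqrt{m}C_1\|v\|_2\le U$, and the elementary first inequality all match the paper.

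However, the step you yourself flag as the main obstacle is a genuine gap, and the decomposition you propose does not close it. From $v'=D^{-1/2}\bigl(PH^{-1}P^{T}\bigr)D^{1/2}w$ and $0\preceq PH^{-1}P^{T}\preceq I$ one cannot extract a bound on $\|v'\|_2$: conjugation by $D^{\pm1/2}$ is a similarity, not an isometry, and here the diagonal entries $D_{ii}=\delta^{-1}\sigma'(a_i(z)/\delta)$ differ by many orders of magnitude between nearly active and strictly inactive constraints, so $\|D^{-1/2}MD^{1/2}\|_2$ can vastly exceed $\|M\|_2$. Worse, the components of $w=A(z)/\delta$ are largest exactly where $D_{ii}$ is smallest, so the two singular factors must be cancelled componentwise rather than in operator norm. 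The paper achieves this by applying the Woodbury identity to $(\delta H_{\delta}+\xi A_{\delta}^{T}\Pi_{\tilde z_\delta,\delta}A_{\delta})^{-1}$, which rewrites the derivative as $J_{\delta}\bigl(J_{\delta}+\tfrac{\mu\delta}{\xi}\Pi_{\tilde z_\delta,\delta}^{-1}\bigr)^{-1}\tfrac{\tilde z_{\delta}}{\delta}$ with $J_{\delta}=A_{\delta}(H_{\delta}/\mu)^{-1}A_{\delta}^{T}\preceq UI$; each component is then damped by a factor of the form $\bigl(1+\tfrac{\mu\delta}{U\xi}\exp(|\tilde z_{\delta,i}|/\delta)\bigr)^{-1}$, and the scalar maximization $\sup_{t\ge0}\tfrac{t}{1+\beta e^{t}}\le\log(1/\beta)-1$ for $\beta\le e^{-2}$ is what produces $\log\left(\tfrac{U\xi}{\mu\delta}\right)-1$ with the $\sqrt{m}$ (rather than $m$) dependence. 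Without this mechanism pairing each $w_i$ with its own exponential damping factor, the derivative estimate, and hence the theorem, does not follow from what you have written. Separately, your smoothing reduction is lighter than it needs to be: the paper must restrict to a compact sublevel set, perturb the $a_i$ by a small quadratic before invoking the $C^{\infty}$ approximation theorems of Azagra and Mudarra, and track the perturbed constants $\hat C_0,\hat C_1$ before passing to the limit; the constants are only approximately preserved, though this part is routine by comparison.
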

Before presenting the proof of Theorem \ref{thm:estimation nonlinear new z infty m 01}, we remark on the comparison with the more basic result of Proposition \ref{prop:nonstrongly convex estimation}.
\begin{remark}
Notice that, in the general non-strongly convex case, a direct corollary of Proposition \ref{prop:nonstrongly convex estimation} is that, when \(\xi\ge 2\inf_{\lambda^*\in \Lambda^*}\|\lambda^* \|_{\infty}\ge 0\) and \(F_{\xi,\delta}(\hat x)-F_{\xi,\delta}(x_{\xi,\delta}^*)\le m\xi\delta\log2\), then it holds that
\[\|\bar A(\hat x)\|_1 \le 4m\delta\log2.\]
The above bound is a weaker result compared with Theorem~\ref{thm:estimation nonlinear new z infty m 01}, since an \(\tilde O(\sqrt{m}\delta)\)  2-norm bound implies an \(\tilde O(m\delta)\) 1-norm bound.
\end{remark}

\begin{proof}

{\em Part 1:} First, we prove the results for the special case where \(F\) and \(a_i\)'s are all twice-continuously differentiable. In this part, for simplicity of notation, we consider \(\xi > \bar \xi\) be a constant and use the simpler notations \(x_{\delta}^*\), \(z_{\delta}\), \(\tilde z_{\delta}\) in the proof. Consider \(x_{\delta}^*\) as a function of \(\delta\). Since \(\nabla F_{\xi ,\delta}(x_{\delta}^*)=0\), by taking the derivative w.r.t. \(\delta\) and applying the chain rule and total derivative formula we have
\[\nabla^2 F_{\xi ,\delta}(x_{\delta}^*)\frac{d x_{\delta}^*}{d\delta}+\frac{\partial}{\partial\delta}\nabla F_{\xi ,\delta}(x_{\delta}^*)=0.\]
By the strong convexity of $F$, we thus have
\begin{equation}\label{eq:Multiple Constraints Derivative Nonlinear}
\begin{aligned}
\frac{d x_{\delta}^*}{d\delta}=\left(\nabla^2 F_{\xi ,\delta}(x)\right)^{-1}\frac{\partial}{\partial\delta}\nabla F_{\xi ,\delta}(x).
\end{aligned}
\end{equation}
Now, note that
\[\begin{aligned}
\nabla F_{\xi ,\delta}(x)&=\nabla F(x)+\xi \sum_{i=1}^m\nabla p_{\delta}(a_i(x))\\ &= \nabla F(x)+\xi \sum_{i=1}^m \nabla a_i(x) \tfrac{\exp(a_i(x)/\delta)}{1+\exp(a_i(x)/\delta)} \cdot
\end{aligned}\]
Then,
\[\frac{\partial}{\partial\delta}\nabla F_{\xi ,\delta}(x)=\xi \sum_{i=1}^m \nabla a_i(x) \tfrac{-[a_i(x)/\delta^2]\exp(a_i(x)/\delta)}{(1+\exp(a_i(x)/\delta))^2}, \ \text{and} \]
\begin{equation}\label{eq:Hessian, linear, infty, m}
\begin{aligned}
    \nabla^2 F_{\xi ,\delta}(x) &= \nabla^2 F(x)+\xi \sum_{i=1}^m \tfrac{[\nabla a_i(x)\nabla a_i(x)^T/\delta]\exp(a_i(x)/\delta)}{(1+\exp(a_i(x)/\delta))^2}\\ 
    & \ \ \ \ + \xi \sum_{i=1}^m \nabla^2 a_i(x) \tfrac{ \exp(a_i(x)/\delta)}{1+\exp(a_i(x)/\delta)}.    
\end{aligned}
\end{equation}
We define the following new notation: \(\tilde{z} =  A(x^*)\), \(\tilde z_{\delta}=A(x_{\delta}^*)\), \(z_{\delta} = \tilde z_{\delta}-\tilde z_{0}=\tilde z_{\delta}-\tilde z\), and
\begin{align}\label{eqn_proof_notation}
&\Pi_{\tilde z_{\delta},\delta}=\mathrm{diag} \left(\tfrac{\exp( a_1(x_{\delta}^*)/\delta)}{(1+\exp( a_1(x_{\delta}^*)/\delta))^2},\cdots,\tfrac{\exp( a_m(x_{\delta}^*)/\delta)}{(1+\exp( a_m(x_{\delta}^*)/\delta))^2} \right), \nonumber \\
&A_{\delta} = (\nabla a_1(x_{\delta}^*),\cdots,\nabla a_m(x_{\delta}^*))^T,\\
&H_{\delta} = \nabla^2F(x_{\delta}^*)+\xi\sum_{i=1}^m \nabla^2 a_i(x_{\delta}^*) \tfrac{ \exp(a_i(x_{\delta}^*)/\delta)}{1+\exp(a_i(x_{\delta}^*)/\delta)}. \nonumber
\end{align}
Then, combining the above notation with \eqref{eq:Hessian, linear, infty, m} allows us to express \eqref{eq:Multiple Constraints Derivative Nonlinear} as
\begin{equation}\label{eq:Multiple Constraints Derivative Nonlinear Matrix Version}
\frac{dx_{\delta}^*}{d \delta}=\xi (\delta H_{\delta} + \xi A_{\delta}^T\Pi_{\tilde z_{\delta},\delta}A_{\delta})^{-1}A_{\delta}^T\Pi_{\tilde z_{\delta},\delta}\frac{\tilde z_{\delta}}{\delta} \cdot    
\end{equation}
Then, 
\[
\frac{dz_{\delta}}{d \delta}=\xi A_{\delta}(\delta H_{\delta} + \xi A_{\delta}^T\Pi_{\tilde z_{\delta},\delta}A_{\delta})^{-1}A_{\delta}^T\Pi_{\tilde z_{\delta},\delta}\frac{\tilde z_{\delta}}{\delta} \cdot  
\]
By Woodbury's identity,
\[\begin{aligned}
&(\delta H_{\delta} + \xi A_{\delta}^T\Pi_{\tilde z_{\delta},\delta}A_{\delta})^{-1} = \delta^{-1}H_{\delta}^{-1} ~-~ \\ & \ \ \ \ \delta^{-2}H_{\delta}^{-1}A_{\delta}^T(\delta^{-1}A_{\delta}H_{\delta}^{-1}A_{\delta}^T+\xi ^{-1}\Pi_{\tilde z_{\delta},\delta}^{-1})^{-1}A_{\delta} H_{\delta}^{-1};
\end{aligned}\]
so let \(J_{\delta}=A_{\delta}\left(\tfrac{H_{\delta}}{\mu}\right)^{-1}A_{\delta}^T\), then
\[\begin{aligned}
\frac{dz_{\delta}}{d \delta}
=J_{\delta} \left(J_{\delta}+\frac{\mu\delta}{\xi }\Pi_{\tilde z_{\delta},\delta}^{-1}\right)^{-1} \frac{\tilde z_{\delta}}{\delta}.
\end{aligned}\]

Notice that 
\[\begin{aligned}
J_{\delta} &= A_{\delta}\left(\tfrac{H_{\delta}}{\mu}\right)^{-1}A_{\delta}^T \preceq \sum_{i=1}^m\|\nabla a_i(x_{\delta}^*)\|_2^2 I\\
&\preceq (m C_0 + C_1\|\tilde z_{\delta}\|_{\infty})I\\ &\preceq (m C_0 + C_1\|\tilde z_{0}\|_{\infty} + \sqrt{m} C_1\|z_{\delta}\|_2)I.
\end{aligned}\]
Define 
\begin{equation*}
\Delta_{\max}=\sup \bigg \{\delta\in[0,\frac{ {U}\xi }{\mu}\exp(-2)]:\ 2mC_1\delta\log\left(\tfrac{{U}\xi}{\mu\delta}\right)\le {U} \bigg \}.    
\end{equation*}
We now show that $m C_0 + C_1\|\tilde z_{0}\|_{\infty} + \sqrt{m} C_1\|z_{\delta}\|_2 \leq U$ for all $\delta \in [0, \Delta_{\max}]$. Let \(\bar \delta = \sup \{\delta\le \frac{{U}\xi }{\mu}\exp(-2): \ 2\sqrt{m} C_1\|z_{\delta}\|_2\le {U}\}\). Then, for \(\delta \in [0,\bar \delta]\),  we have
\[\left\|\frac{dz_{\delta}}{d \delta}\right\|_2\le\left\|\left(\tfrac{|\tilde z_{\delta,1}|/\delta}{1+\tfrac{\mu\delta}{{U}\xi }\exp(|\tilde z_{\delta,1}|/\delta)},...,\tfrac{|\tilde z_{\delta,m}|/\delta}{1+\tfrac{\mu\delta}{{U}\xi }\exp(|\tilde z_{\delta,m}|/\delta)}\right)\right\|_2.\]
And since \(\bar \delta \le \frac{{U}\xi }{\mu}\exp(-2)\), it holds that
\[\tfrac{|\tilde z_{\delta,i}|/\delta}{1+\tfrac{\mu\delta}{{U}\xi }\exp\left(|\tilde z_{\delta,i}|/\delta\right)}\le \log\left(\tfrac{{U}\xi }{\mu\delta}\right)-1.\]
Hence, \(\left\|\tfrac{dz_{\delta}}{d \delta}\right\|_2\le \sqrt{m}\left(\log\left(\tfrac{{U}\xi }{\mu\delta}\right)-1\right)\), and  \(\|z_{ \delta}\|_2\le\sqrt{m} \delta\log(\tfrac{{U}\xi }{\mu \delta})\), i.e.,
\begin{equation*}
2\sqrt{m}C_1\|z_{\delta}\|\le 2mC_1\delta\log\left(\tfrac{U\xi}{\mu\delta}\right)\) for \(\delta\in [0,\bar \delta].
\end{equation*}
Therefore, \(\bar \delta\le\Delta_{\max}\), and \(\|z_{\delta}\|_2\le \sqrt{m} \delta\log(\tfrac{{U}\xi }{\mu \delta})\) for \(\delta\in [0,\Delta_{\max}]\).

{\em Part 2:} Now suppose some (or all) \(F\) and \(a_i\)'s are not twice-continuously differentiable. For \(\epsilon>0\), let \(S_{\epsilon}\) be the following sublevel set
\[S_{\epsilon}=\{x\in\bbR^n:F_{\xi,0}(x)\le F_{\xi,0}(0)+m\xi\Delta_{\max}+(2m\xi+1)\epsilon\}.\]
Since 
\[\begin{aligned}
&F_{\xi,0}(x_{\xi,\delta}^*)=F(x_{\xi,\delta}^*)+\xi\sum_{i=1}^{{m}}\max(0,a_i(x_{\xi,\delta}^*))\le F_{\xi,\delta}(x_{\xi,\delta}^*)\\\le &F_{\xi,\delta}(0)
\le F(0)+\xi\sum_{i=1}^{{m}}\max(0,a_i(0))+m\xi\Delta_{\max},
\end{aligned}\]
we have \(x_{\xi,\delta}^*\in S_{\epsilon}\) for all \(\delta\in[0,\Delta_{\max}]\). Because \(F\) is strongly convex, \(S_{\epsilon}\) is a compact convex set. Then, there exists \(R_{\epsilon}>0\), s.t. \(\forall x\in S_{\epsilon}\), \(\|x\|_2\le R_{\epsilon}\). Hence, we construct the following approximation of \(a_i(x)\), \(i=1,...,{m}\),
\[\tilde a_i(x)=a_i(x)+\frac {\epsilon}{2}\|x\|_2^2\min\left(1,\frac{1}{R_{\epsilon^2}}\right).\]
Then, by Theorem 1.9 and 1.10 in \cite{azagra2013global}, there are \(C^{\infty}\) convex approximations \(\hat F(x)\) and \(\hat a_i(x)\), \(i=1,...,{m}\), such that $\hat F(x)\text{ is }\mu-\text{stongly convex}$ and
\[\begin{aligned}
&F(x)-\epsilon\le \hat F(x)\le F(x),  \\ 
&\|\nabla \hat a_i(x)-\nabla \tilde a_i(x) \|_2\le\epsilon, \text{ and } \\
&|\hat a_i(x)-\tilde a_i(x) |\le\epsilon,\ \forall x\in\bbR^n.
\end{aligned}\]
Let \[\hat F_{\xi,\delta}(x)=\hat F(x)+\xi\sum_{i=1}^{{m}}p_{\delta}(\hat a_i(x)),\]
and \(\hat x_{\xi,\delta}^*=\arg\min\hat F_{\xi,\delta}(x)\). The above approximation guarantees imply that 
\[\begin{aligned}
&\|\nabla \hat a_i(x)\|_2^2 \le (\epsilon+\|\nabla a_i(x)\|_2)^2 \\
& \le \epsilon^2+2\epsilon\sqrt{C_0+C_1|a_i(x)|}+C_0+C_1|a_i(x)|\\
& \le \epsilon^2 + \epsilon + (C_0+C_1|a_i(x)|) (1+\epsilon)\\
& \le \hat C_0 + \hat C_1 |\hat a_i(x)|,
\end{aligned}\]
where \(\hat C_1 = C_1(1+\epsilon)\), and \(\hat C_0 = (1+\epsilon) ( C_0 + \epsilon C_1 + \epsilon )\). Let \(\tilde z_{\xi,\delta}'=(\hat a_1(x_{\xi,\delta}^*),...,\hat a_m(\hat x_{\xi,\delta}^*))\), \(\hat z_{\xi,\delta} = \tilde z_{\xi,\delta}'-\tilde z_{\xi,0}\). By the result of the first part, we know for \(\delta \in [0,\hat \Delta_{\max}]\), 
\begin{equation}\label{eq:hat est}
\|\hat z_{\xi,\delta}\|_2\le \sqrt{m}\delta \log(\tfrac{\hat U \xi}{\mu\delta}),
\end{equation} where 
\(\hat U = 2 \hat C_1 \|\tilde z'_{\xi,0}\|_{\infty} +2m\hat C_0\), and \(\hat \Delta_{\max}=\sup\ \big \{\delta\in[0,\frac{ \hat {U}\xi }{\mu}\exp(-2)]:\ 2mC_1\delta\log\left(\tfrac{\hat {U}\xi}{\mu\delta}\right)\le \hat {U} \big \}\ge \Delta_{\max}\).

\noindent
Then, 
\[\begin{aligned}
&F(\hat x_{\xi,\delta}^*)+\xi\sum_{i=1}^{{m}}\max(0,a_i(\hat x_{\xi,\delta}^*))\\ \le\ &F(\hat x_{\xi,\delta}^*)+\xi\sum_{i=1}^{{m}}\max(0,\hat a_i(\hat x_{\xi,\delta}^*))\\ 
\le\ & \epsilon + \hat F(\hat x_{\xi,\delta}^*)+m\xi\epsilon+\xi\sum_{i=1}^{{m}}\max(0,\hat a_i(\hat x_{\xi,\delta}^*))\\
\le\ & (m\xi+1)\epsilon+ \hat F(0)+\xi\sum_{i=1}^{{m}}\max(0,\hat a_i(0))+m\xi\delta\\
\le\ & (2m\xi+1)\epsilon+ F(0)+ \xi\sum_{i=1}^{{m}}\max(0,a_i(0))+m\xi\delta.\\
\end{aligned}\]
Hence, when \(\delta\in[0,\Delta_{\max}]\), we have \(\hat x_{\xi,\delta}^*(\epsilon)\in S_{\epsilon}\). Then, because 
\[\begin{aligned}
&|F_{\xi,\delta}(x)-\hat F_{\xi,\delta}(x)|\\\le\ &|F(x)-\hat F(x)|+\xi\sum_{i=1}^{{m}}|p_{\delta}(a_i(x))-p_{\delta}(\hat a_i(x))|\\
\le\ & (2m\xi+1)\epsilon,\ \forall x \in S_{\epsilon},
\end{aligned}\]
by the \(\mu\)-strong convexity, we have \(\|x_{\xi,\delta}^*-\hat x_{\xi,\delta}^*\|_2 \le  \sqrt{\frac{2(2m\xi+1)\epsilon}{\mu}}\). Therefore, 
\[\begin{aligned}
|a_i(x_{\xi,\delta}^*)-\hat a_i(\hat x_{\xi,\delta}^*)| \le \epsilon + |a_i(x_{\xi,\delta}^*) - a_i(\hat x_{\xi,\delta}^*)|
\end{aligned}\]
 converges to \(0\) when \(\epsilon\) goes to 0. Combined with \eqref{eq:hat est}, since \(\epsilon\in(0,\epsilon_{\max}]\) can be chosen arbitrarily, when \(\epsilon\) goes to $0$, we have the following upper bound:
 \[ \|A(x_{\xi ,\delta}^*)-A(x^*)\|_2\le \sqrt{m}\delta\log\left(\tfrac{{U}\xi }{\mu\delta}\right).\]
\end{proof}

\begin{remark}
Following the same proof technique as in Theorem~\ref{thm:estimation nonlinear new z infty m 01}, applied to the change in function values, leads to an \(\tilde O(m\xi\delta)\) gap. Furthermore, unlike Proposition \ref{prop:function value gap nonlinear}, this bound can be tightened to \(\tilde O(m^A\xi\delta)\) where $m^A$ is the number of active constraints at $x^*$.
Indeed, notice that 
\[\nabla F_{\xi ,\delta}(x_{\xi,\delta}^*) = \nabla F(x_{\xi,\delta}^*)+\xi \sum_{i=1}^m \nabla a_i(x_{\xi,\delta}^*) \tfrac{\exp(a_i(x_{\xi,\delta}^*)/\delta)}{1+\exp(a_i(x_{\xi,\delta}^*)/\delta)}=0,\]
which, recalling the notation defined in \eqref{eqn_proof_notation}, can be rearranged to
\[\begin{aligned}
\nabla F(x_{\xi,\delta}^*)^T = &\xi e^T \mathrm{diag}  \left(p_{\delta}'(a_1(x_{\xi,\delta}^*)),...,p_{\delta}'(a_m(x_{\xi,\delta}^*)) \right)A_{\delta}    
\end{aligned}.\]
Hence,
\[\begin{aligned}
& \frac{d F (x_{\xi,\delta}^*) }{d \delta} = \nabla F(x_{\xi,\delta}^*)^T \frac{d x_{\xi,\delta}^*}{d \delta}\\
& = \xi e^T \mathrm{diag}  \left(p_{\delta}'(a_1(x_{\xi,\delta}^*)),...,p_{\delta}'(a_m(x_{\xi,\delta}^*)) \right)\\&A_{\delta}
(\delta H_{\delta} + \xi A_{\delta}^T\Pi_{\tilde z_{\delta},\delta}A_{\delta})^{-1}A_{\delta}^T\Pi_{\tilde z_{\delta},\delta}\frac{\tilde z_{\delta}}{\delta} .
\end{aligned}\]

Following similar steps as in the proof of Theorem~\ref{thm:estimation nonlinear new z infty m 01} leads to the bound:
\[\left|\frac{d F (x_{\xi,\delta}^*) }{d \delta}\right| \le  \xi\sum_{i=1}^m \tfrac{|\tilde z_{\delta,i}|/\delta}{1+\tfrac{\mu\delta}{{U}\xi }\exp(|\tilde z_{\delta,i}|/\delta)}\tfrac{\exp(\tilde z_{\delta,i}/\delta)}{1+\exp(\tilde z_{\delta,i}/\delta)}.\]
And similar as Proposition~\ref{prop:function value gap nonlinear}, the upper bound above is \(\tilde O(m\xi\delta)\) as \(\delta\) goes to 0. 

Notice that for inactive constraints \(a_i\), (i.e. \(a_i(x^*)<0\)), we have that \(\tfrac{\exp(\tilde z_{\delta,i}/\delta)}{1+\exp(\tilde z_{\delta,i}/\delta)}\) goes to zero when \(\delta\) goes to zero (since \(\tilde z_{\delta,i}\) will not go to zero). Therefore, asymptotically the gap converges in an \(\tilde O(m^A \xi\delta)\) rate, where $m^A$ is the number of active constraints at $x^*$. %
\end{remark}

The following example shows that the bound in Theorem~\ref{thm:estimation nonlinear new z infty m 01} is tight. 

\begin{example}\label{example:entrywise1}
Consider the following problem:
\[
\begin{aligned}
\min_{x\in\mathbb{R}^n}\quad&F(x)=\frac12\|x\|_2^2+e^Tx\\
s.t.\quad & e_i^Tx\ge0,\ i=1,...,m,
\end{aligned}
\]
where \(m\le n\). Then, $\bar{\xi} = 1$, and for \(\xi\in[1,2)\) and \(\delta>0\), \(x_{\xi,\delta}^*\), the solution to the penalized problem 
\[\min_{x}\quad \frac12\|x\|_2^2+e^Tx+\xi\sum_{i=1}^m\delta \log(1+\exp(-x_i/\delta))\]
satisfies for each component  \(i=1,...,m\), that \(x_{\xi,\delta,i}^*\) is given by the solution to \(1+t-\xi\tfrac{\exp(-\frac t\delta)}{1+\exp(-\frac t\delta)}=0\), which is \(\tilde \Theta(\delta)\), \(i=1,...,m\). Hence, 
the 2-norm of constraint violations satisfies
$\|\bar{A}(x_{\xi,\delta}^*)\|_2 = \tilde \Theta( \sqrt{m} \delta)$
and the objective function gap satisfies \(F(x_{\xi,\delta}^*) - F(x^\ast)= \tilde \Theta(m\delta)\).
\end{example}

\section{Algorithms and Applications}
In this section, we discuss several applications of the penalty reformulation \eqref{eq:Penalty Nonlinear} and our estimation results (primarily Theorem \ref{thm:estimation nonlinear new z infty m 01}) for the error bounds in the previous section. First, we consider solving a static penalty reformulation with an accelerated stochastic gradient method and analyze the convergence rate towards an approximate solution (Definition \ref{def:approx sol}) of the original constrained problem. Next, we show how we can address non-smooth constraints with smooth approximations.
\label{sec:applications}
\subsection{Algorithms and Complexity}
\label{subsec:complexity}
In this subsection, we show how we can solve the original problem \eqref{eq:Original Nonlinear} by applying an accelerated stochastic gradient method to our penalty reformulation \eqref{eq:Penalty Nonlinear}. For simplicity, in Corollary~\ref{cor:Static Complexity-SVRG Catalyst Nonlinear}, we focus on using constant \(\xi\ge \bar \xi\) and \(\delta\) tuned based on the desired accuracy, and we solve the corresponding penalty reformulation problem \(\min_{x} F_{\xi,\delta}(x)\) with accelerated deterministic or stochastic gradient methods.  We state Corollary~\ref{cor:Static Complexity-SVRG Catalyst Nonlinear} with the SVRG with catalyst acceleration \citep{lin2015universal, lin2018catalyst}, while other accelerated methods for finite sum problems, such as Katyusha \citep{allen2017katyusha} and the RPDG method \citep{lan2018optimal}, are also applicable for this result.
Before showing the complexity results, we show that the smoothness of \eqref{eq:Penalty Nonlinear} can be estimated, which means using first-order methods on \eqref{eq:Penalty Nonlinear} is practical. %

\begin{proposition}\label{prop:LSmoothforPenaltyReformulation}
\(p_{\delta}(a_i(x))\) is \((L_a+\frac{C_1}{4}+\frac{C_0}{4\delta})\)-smooth, \(i=1,...,{m}\).
\end{proposition}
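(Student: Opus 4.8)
The plan is to establish the bound by controlling the operator norm of the Hessian of \(g_i(x) := p_\delta(a_i(x))\), relying on the growth condition (third part of Assumption~\ref{assum:Assumption Nonlinear Constraints}) to tame the otherwise unbounded gradients. I would first record the relevant one-dimensional facts about the softplus function: \(p_\delta'(t) = \sigma(t/\delta)\), where \(\sigma(u) = e^u/(1+e^u)\) is the logistic sigmoid, so that \(0 \le p_\delta'(t) \le 1\); and \(p_\delta''(t) = \tfrac1\delta \sigma(t/\delta)(1-\sigma(t/\delta))\), so that \(0 \le p_\delta''(t) \le \tfrac1{4\delta}\) since \(\sigma(1-\sigma) \le \tfrac14\). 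Because \(p_\delta\) is convex and nondecreasing and \(a_i\) is convex, \(g_i\) is convex, so it suffices to produce a one-sided bound \(\nabla^2 g_i(x) \preceq \left(L_a + \tfrac{C_1}{4} + \tfrac{C_0}{4\delta}\right) I\) on the Hessian.

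Assuming for the moment that \(a_i \in C^2\), the chain rule gives \(\nabla^2 g_i(x) = p_\delta''(a_i(x)) \nabla a_i(x)\nabla a_i(x)^T + p_\delta'(a_i(x)) \nabla^2 a_i(x)\), and I would bound the two summands separately. The first is positive semidefinite with operator norm exactly \(p_\delta''(a_i(x)) \|\nabla a_i(x)\|_2^2\); applying the growth condition \(\|\nabla a_i(x)\|_2^2 \le C_0 + C_1|a_i(x)|\) reduces this to the scalar quantity \(p_\delta''(a_i(x))(C_0 + C_1|a_i(x)|)\). For the second summand, \(L_a\)-smoothness of the convex function \(a_i\) gives \(0 \preceq \nabla^2 a_i(x) \preceq L_a I\), and \(0 \le p_\delta'(a_i(x)) \le 1\), so its operator norm is at most \(L_a\).

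The crux is the scalar inequality \(p_\delta''(t)(C_0 + C_1|t|) \le \tfrac{C_0}{4\delta} + \tfrac{C_1}{4}\) for all \(t\), which is exactly where the exponential decay of \(p_\delta''\) must offset the linear growth permitted by the assumption. Writing \(u = t/\delta\) and splitting, the \(C_0\) part is handled by \(\sigma(u)(1-\sigma(u)) \le \tfrac14\), while the \(C_1\) part requires the sharper estimate \(\sigma(u)(1-\sigma(u))|u| \le \tfrac14\). I would prove this last bound by noting \(\sigma(u)(1-\sigma(u)) = \tfrac1{4\cosh^2(u/2)}\), so the claim is equivalent to \(|u| \le \cosh^2(u/2) = \tfrac{1+\cosh u}{2}\); for \(u \ge 0\) this reduces to \(2u - 1 \le \cosh u\), which follows from \(\cosh u \ge 1 + \tfrac{u^2}{2}\) together with \((u-2)^2 \ge 0\). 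Combining the two summand bounds then yields the claimed constant \(L_a + \tfrac{C_1}{4} + \tfrac{C_0}{4\delta}\).

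I expect the main obstacle to be the non-\(C^2\) case, since Assumptions~\ref{assum:Assumption Nonlinear Obj} and~\ref{assum:Assumption Nonlinear Constraints} only guarantee that \(a_i\) has a Lipschitz gradient rather than a genuine Hessian. To handle this rigorously I would either argue directly, bounding \(\|\nabla g_i(x) - \nabla g_i(y)\|_2\) by splitting \(\nabla g_i = p_\delta'(a_i)\nabla a_i\) through the triangle inequality and using \(L_a\)-smoothness on one piece and a mean-value estimate for \(p_\delta'\) controlled by the same scalar inequality on the other, or, more cleanly, approximate each \(a_i\) by \(C^\infty\) convex functions \(\hat a_i\) preserving the growth constants up to \(o(1)\), exactly as in Part~2 of the proof of Theorem~\ref{thm:estimation nonlinear new z infty m 01} via \cite{azagra2013global}, apply the \(C^2\) bound to \(\hat a_i\), and pass to the limit. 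The scalar inequality above is the one genuinely new ingredient; everything else is bookkeeping.
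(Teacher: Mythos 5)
Your argument is correct in substance and rests on exactly the same two scalar facts as the paper's proof, namely $p_\delta''(t)\le\tfrac1{4\delta}$ and $|t|\,p_\delta''(t)\le\tfrac14$ (the paper asserts the latter as $t\,e^t/(1+e^t)^2\le\tfrac14$ without justification, so your $\cosh$ computation is a nice addition), combined with the growth condition $\|\nabla a_i(x)\|_2^2\le C_0+C_1|a_i(x)|$ to absorb the unbounded gradient. Where you diverge is the vehicle: you bound the Hessian of $p_\delta\circ a_i$, which presupposes $a_i\in C^2$ and forces a separate reduction, whereas the paper works entirely at first order. It applies the mean value theorem to $t\mapsto p_\delta'(a_i(x_1+t(x_2-x_1)))$ to produce a point $x_c$ on the segment with $p_\delta'(a_i(x_2))-p_\delta'(a_i(x_1))=p_\delta''(a_i(x_c))\nabla a_i(x_c)^T(x_2-x_1)$, and then decomposes
\[
p_\delta'(a_i(x_2))\nabla a_i(x_2)-p_\delta'(a_i(x_1))\nabla a_i(x_1)=\bigl(p_\delta'(a_i(x_2))-p_\delta'(a_i(x_1))\bigr)\nabla a_i(x_c)+\sum_{j=1,2}\pm\, p_\delta'(a_i(x_j))\bigl(\nabla a_i(x_j)-\nabla a_i(x_c)\bigr),
\]
so that the second-order piece becomes $p_\delta''(a_i(x_c))\|\nabla a_i(x_c)\|_2^2\|x_2-x_1\|_2$ with \emph{both} gradient factors anchored at the single point $x_c$, while the two correction terms cost $L_a(\|x_2-x_c\|_2+\|x_1-x_c\|_2)=L_a\|x_2-x_1\|_2$ because $x_c$ lies on the segment. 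This is self-contained under the stated assumptions (which give only Lipschitz gradients, not Hessians) and delivers the constant exactly.

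Two cautions about your proposed fallbacks for the non-$C^2$ case. If the ``direct'' split is done in the naive form $p_\delta'(a_i(x_2))(\nabla a_i(x_2)-\nabla a_i(x_1))+(p_\delta'(a_i(x_2))-p_\delta'(a_i(x_1)))\nabla a_i(x_1)$, the mean-value step leaves you with the cross term $\|\nabla a_i(x_c)\|_2\|\nabla a_i(x_1)\|_2$ involving gradients at two different points, which the growth condition does not directly control; the paper's choice of anchoring both factors at $x_c$ is what makes this work cleanly. And the smoothing route needs the approximants $\hat a_i$ to be $(L_a+o(1))$-smooth, which does not follow merely from $\|\nabla\hat a_i-\nabla a_i\|_2\le\epsilon$; you would have to invoke a version of the approximation theorems that controls the gradient's Lipschitz constant. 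Neither issue is fatal, but the paper's first-order argument sidesteps both.
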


\begin{proof}
For \(x_1,x_2\in \bbR^n\), without loss of generality, assume \(a_i(x_2)\ge a_i(x_1)\). Then, there exists a point \(x_c\) on the line segment between $x_1$ and $x_2$, s.t. 
\[\begin{aligned}
\nabla a_i(x_c)^T(x_2-x_1) p''_{\delta}(a_i(x_c))= p'_{\delta}(a_i(x_2))-p'_{\delta}(a_i(x_1)),
\end{aligned}\]
where 
\[\begin{aligned}
p_{\delta}'(t) = \frac{\exp(t/\delta)}{1+\exp(t/\delta)},\ p_{\delta}''(t) = \frac{\exp(t/\delta)}{\delta (1+\exp(t/\delta))^2} \cdot
\end{aligned}\]
Then,
\[\begin{aligned}
&\|\nabla p_{\delta}(a_i(x_2))-\nabla p_{\delta}(a_i(x_1))\|_2\\
& = \|p'_{\delta}(a_i(x_2))\nabla a_i(x_2)-p'_{\delta}( a_i(x_1) )\nabla a_i(x_1) \|_2\\
& \le \|\nabla a_i(x_c)\|_2 (p'_{\delta}(a_i(x_2))-p'_{\delta}(a_i(x_1)))\\
& \ \ \ \ + p'_{\delta}(a_i(x_2))\|\nabla a_i(x_2)-\nabla a_i(x_c)\|_2\\
& \ \ \ \ + p'_{\delta}(a_i(x_1))\|\nabla a_i(x_1)-\nabla a_i(x_c)\|_2\\
& \le \|\nabla a_i(x_c)\|_2^2p_{\delta}''(a_i(x_c))\|x_2-x_1\|_2+L_a\|x_2-x_1\|_2\\
& \le (L_a+(C_0+C_1a_i(x_c))p_{\delta}''(a_i(x_c)))\|x_2-x_1\|_2.
\end{aligned}\]
Notice that 
\[\begin{aligned}
\frac{\exp(t)}{(1+\exp(t))^2}&\le\frac14,\ t\frac{\exp(t)}{(1+\exp(t))^2}&\le \frac14 \cdot\\
\end{aligned}\]
Thus, we have the  result.
\end{proof}

\begin{corollary}\label{cor:Static Complexity convex}
Suppose \(\mu=0\). For any desired accuracy \(\epsilon\) and penalty parameter \(\xi \ge 2\bar{\xi}\), consider applying accelerated full gradient methods to solve a penalty reformulation subproblem \eqref{eq:Penalty Nonlinear} with penalty parameter
$\delta_{\epsilon}=\epsilon/4m\log 2.$
Then, the number of full-gradient iterations required to get an \(\left(\epsilon,\xi\epsilon\right)\)-approximate solution \(\tilde x\)  with respect to the $1$-norm
is upper bounded by
\[O\left(\frac{L_f+m(L_a+C_1)}{\sqrt{\epsilon}} + \frac{m^2\xi C_0}{\epsilon\sqrt{\epsilon}}\right) \cdot\]
\end{corollary}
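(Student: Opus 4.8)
The plan is to reduce the claim to a statement about how accurately one must minimize the penalty subproblem $\min_x F_{\xi,\delta_\epsilon}(x)$, and then to invoke the standard complexity of accelerated full-gradient methods together with the smoothness estimate of Proposition~\ref{prop:LSmoothforPenaltyReformulation}. First I would fix $\delta = \delta_\epsilon = \epsilon/(4m\log 2)$ and determine the target suboptimality $\epsilon' := F_{\xi,\delta_\epsilon}(\tilde x) - F_{\xi,\delta_\epsilon}(x_{\xi,\delta_\epsilon}^*)$ needed so that the computed $\tilde x$ is an $(\epsilon,\xi\epsilon)$-approximate solution. For the constraint violation, since $\xi \ge 2\bar{\xi}$, the remark following Proposition~\ref{prop:nonstrongly convex estimation} shows that $\epsilon' \le m\xi\delta_\epsilon\log 2$ already forces $\|\bar{A}(\tilde x)\|_1 \le 4m\delta_\epsilon\log 2 = \epsilon$. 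For the objective gap I would chain $F(\tilde x) \le F_{\xi,\delta_\epsilon}(\tilde x) \le F_{\xi,\delta_\epsilon}(x_{\xi,\delta_\epsilon}^*) + \epsilon' \le F_{\xi,\delta_\epsilon}(x^*) + \epsilon'$, and then bound $F_{\xi,\delta_\epsilon}(x^*) \le F^* + m\xi\delta_\epsilon\log 2$ using feasibility of $x^*$ (so that $p_{\delta_\epsilon}(a_i(x^*)) \le p_{\delta_\epsilon}(0) = \delta_\epsilon\log 2$), exactly as in the upper bound of Proposition~\ref{prop:function value gap nonlinear}. With the choice $\epsilon' = m\xi\delta_\epsilon\log 2 = \xi\epsilon/4$ this yields $F(\tilde x) - F^* \le 2\epsilon' = \xi\epsilon/2 \le \xi\epsilon$, so it suffices to solve the subproblem to accuracy $\epsilon' = \xi\epsilon/4$.

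Next I would estimate the smoothness of the subproblem. By Proposition~\ref{prop:LSmoothforPenaltyReformulation} each $p_{\delta_\epsilon}(a_i(\cdot))$ is $(L_a + C_1/4 + C_0/(4\delta_\epsilon))$-smooth, while the smooth part of $F$ is $L_f$-smooth; hence the smooth part of $F_{\xi,\delta_\epsilon}$ has gradient-Lipschitz constant $L_{\mathrm{tot}} = L_f + m\xi\bigl(L_a + C_1/4 + C_0/(4\delta_\epsilon)\bigr)$, and substituting $\delta_\epsilon$ turns the last term into $m^2\xi C_0\log 2/\epsilon$. I would then invoke the accelerated full-gradient guarantee in the $\mu = 0$ convex case, namely that $O\bigl(\sqrt{L_{\mathrm{tot}}/\epsilon'}\bigr)$ iterations (with the initial distance to $x_{\xi,\delta_\epsilon}^*$ absorbed into the constant) suffice to reach accuracy $\epsilon'$. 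Plugging in $L_{\mathrm{tot}}$ and $\epsilon' = \xi\epsilon/4$, separating the bounded part of $L_{\mathrm{tot}}$ from the $1/\delta_\epsilon$ blow-up via $\sqrt{a+b}\le\sqrt a+\sqrt b$, and bounding the resulting terms gives the claimed two-term expression.

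The routine part of the argument is the constant-chasing in this final simplification, which I would not belabor. The conceptually delicate point, and the one I would be most careful about, is the tension built into the choice of $\delta_\epsilon$: it must shrink like $\epsilon/m$ to drive the $1$-norm constraint violation below $\epsilon$ through Proposition~\ref{prop:nonstrongly convex estimation}, yet this very choice forces the smoothness constant to grow like $1/\delta_\epsilon \sim m/\epsilon$, which is precisely what degrades the $\epsilon$-dependence appearing in the second term of the bound. Verifying that a single value of $\delta_\epsilon$ simultaneously certifies both the $\epsilon$ constraint-violation requirement and the $\xi\epsilon$ objective-gap requirement, while keeping the smoothness penalty controlled enough to feed into the accelerated-method complexity, is the crux of the proof; everything else reduces to substitution.
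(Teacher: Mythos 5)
Your proposal matches the paper's proof essentially step for step: the same choice of $\delta_\epsilon$, the same target subproblem accuracy $m\xi\delta_\epsilon\log 2 = \xi\epsilon/4$, the constraint-violation bound via Proposition~\ref{prop:nonstrongly convex estimation} (and the remark following it), the objective-gap chain as in Proposition~\ref{prop:function value gap nonlinear}, and the smoothness constant from Proposition~\ref{prop:LSmoothforPenaltyReformulation} fed into the accelerated full-gradient complexity. Your more explicit $\sqrt{L_{\mathrm{tot}}/\epsilon'}$ accounting in fact yields a slightly tighter expression than the stated bound, which it implies, so the argument is correct and essentially identical to the paper's.
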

\begin{proof}
Notice that when \(\xi\ge 2\bar{\xi}\), \(\delta_{\epsilon}=\frac{\epsilon}{4m\log 2}\), and \(F_{\xi,\delta_{\epsilon}}(\tilde x)-F_{\xi,\delta_{\epsilon}}(x_{\xi,\delta_{\epsilon}^*})\le m\xi\delta_{\epsilon}\log2\), by Proposition~\ref{prop:nonstrongly convex estimation},
\(\|\bar A(\tilde x)
\|_1\le \epsilon\). Similar to Proposition~\ref{prop:function value gap nonlinear},
\[\begin{aligned}&F(\tilde x) \le F_{\xi,\delta_{\epsilon}}(\tilde x) \\ &\le F_{\xi,\delta_{\epsilon}}(x_{\xi,\delta_{\epsilon}^*} )+ m\xi\delta_{\epsilon}\log2 \le F(x^*) +2m\xi\delta_{\epsilon}\log2\\&\le F(x^*)+\frac{\xi\epsilon}2 \cdot
\end{aligned}\]
When \(\delta_{\epsilon}=\frac{\epsilon}{4m\log 2}\), the problem consists of \({\ell}\) components that are \(\frac{{\ell}+{m}}{\ell}L_f\)-smooth, and \({m}\) components that are \((m+\ell)(L_a+\frac{C_1}{4}+\frac{C_0}{4\delta_{\epsilon}})\), i.e., \(O\left((m+\ell)(L_a+C_1+m\xi C_0/\epsilon)\right)\)-smooth. Hence, if we apply the accelerated method to solve \(\min_{x} F_{\xi,\delta_{\epsilon}}(x)\) such that \(F_{\xi,\delta_{\epsilon}}(\tilde x)-F_{\xi,\delta_{\epsilon}}^* \le m\xi\delta_{\epsilon}\log2=\frac{\xi\epsilon}4\), the complexity is
\[O\left(\frac{L_f+m(L_a+C_1)}{\sqrt{\epsilon}} + \frac{m^2\xi C_0}{\epsilon\sqrt{\epsilon}}\right).\]
\end{proof}

\begin{corollary}\label{cor:Static Complexity-SVRG Catalyst Nonlinear}
Suppose \(\mu>0\) and let \(\epsilon\in (0,\frac{\sqrt{m}{U}\xi }{\mu}\exp(-2)]\), with penalty parameters \(\xi \ge \bar{\xi}\), and \[\delta_{\epsilon}=\frac{\epsilon}{4{\sqrt m}}\left(\log\left(\frac{2{\sqrt m}{U}\xi }{\mu\epsilon}\right)\right)^{-1}.\]
1. Consider applying accelerated full gradient methods to solve a penalty reformulation subproblem \eqref{eq:Penalty Nonlinear} with the penalty parameters \(\xi\) and \(\delta_{\epsilon}\). . Then, the number of full-gradient iterations required to get an \(\left(\epsilon,\sqrt{m}\xi\epsilon\right)\)-approximate solution \(\tilde x\)  with respect to the $2$-norm is upper bounded by
\[
\tilde O\left(\sqrt{\frac{L_f+m\xi L_a + m\xi C_1}{\mu}}+m^{\frac34}\sqrt{\frac{\xi C_0}{\mu\epsilon}}\right) \cdot
\]
2. Consider applying proximal SVRG with catalyst acceleration \citep{lin2015universal, lin2018catalyst} to solve a penalty reformulation subproblem \eqref{eq:Penalty Nonlinear} with the penalty parameters \(\xi\) and \(\delta_{\epsilon}\). 
Then, the expected number of incremental steps required to obtain an \(\left(\epsilon,\sqrt{m}\xi\epsilon\right)\)-approximate solution \(\tilde x\) with respect to the $2$-norm is upper bounded by
\[
\tilde O\left(\ell + \sqrt{\frac{(\ell+m)(L_f+m\xi L_a + m\xi C_1)}{\mu}}+m^{\frac34}\sqrt{\frac{(\ell+m)\xi C_0}{\mu\epsilon}}\right).
\]
\end{corollary}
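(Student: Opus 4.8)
The plan is to assemble the two complexity bounds from three ingredients already in hand: the a priori error bounds of Theorem~\ref{thm:estimation nonlinear new z infty m 01} and Proposition~\ref{prop:function value gap nonlinear}, the smoothness estimate of Proposition~\ref{prop:LSmoothforPenaltyReformulation}, and the standard black-box iteration complexities of (i)~the accelerated proximal full-gradient method and (ii)~catalyst-accelerated proximal SVRG. The one genuinely new difficulty relative to the convex case of Corollary~\ref{cor:Static Complexity convex} is that Theorem~\ref{thm:estimation nonlinear new z infty m 01} controls the constraint violation only at the \emph{exact} minimizer $x_{\xi,\delta}^*$, whereas the solver returns an approximate iterate $\tilde x$; transferring the bound from $x_{\xi,\delta}^*$ to $\tilde x$ will be the crux.

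First I would fix $\delta=\delta_\epsilon$ as in the statement and check that $\delta_\epsilon$ lies in the admissible range of Theorem~\ref{thm:estimation nonlinear new z infty m 01}, i.e.\ $\delta_\epsilon\le\frac{U\xi}{\mu}\exp(-2)$ and $2mC_1\delta_\epsilon\log(U\xi/(\mu\delta_\epsilon))\le U$; both follow from the hypothesis $\epsilon\le\frac{\sqrt m\,U\xi}{\mu}\exp(-2)$ together with the logarithmic factor appearing in the denominator of $\delta_\epsilon$. Substituting $\delta_\epsilon$ into Theorem~\ref{thm:estimation nonlinear new z infty m 01} gives $\|\bar A(x_{\xi,\delta_\epsilon}^*)\|_2\le\sqrt m\,\delta_\epsilon\log(U\xi/(\mu\delta_\epsilon))\le\epsilon/2$, where the constant $4$ in $\delta_\epsilon$ supplies the slack needed to absorb $\log(U\xi/(\mu\delta_\epsilon))$ against the $\log(2\sqrt m U\xi/(\mu\epsilon))$ in the denominator. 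In parallel, Proposition~\ref{prop:function value gap nonlinear} yields $F(x_{\xi,\delta_\epsilon}^*)-F^\ast\le m\xi\delta_\epsilon\log2\le\tfrac14\sqrt m\,\xi\epsilon$.

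The transfer step is the main obstacle. Suppose the inner solver returns $\tilde x$ with $F_{\xi,\delta_\epsilon}(\tilde x)-F_{\xi,\delta_\epsilon}^\ast\le\eta$. For the objective target I use the one-sided chain $F(\tilde x)\le F_{\xi,\delta_\epsilon}(\tilde x)\le F_{\xi,\delta_\epsilon}(x^\ast)+\eta\le F^\ast+m\xi\delta_\epsilon\log2+\eta$, exactly as in Corollary~\ref{cor:Static Complexity convex}. The constraint violation is harder, since I must relate $\bar A(\tilde x)$ to $\bar A(x_{\xi,\delta_\epsilon}^*)$. Here $\mu$-strong convexity of $F_{\xi,\delta_\epsilon}$ gives $\|\tilde x-x_{\xi,\delta_\epsilon}^*\|_2\le\sqrt{2\eta/\mu}$, and both points lie in a compact sublevel set of $F_{\xi,\delta_\epsilon}$ on which $\|A(\cdot)\|_\infty$ is bounded; the growth condition (Assumption~\ref{assum:Assumption Nonlinear Constraints}.3) then bounds $\|\nabla a_i\|_2$ there by a constant $G$ depending only on $C_0,C_1,\|A(x^\ast)\|_\infty$. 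Since $t\mapsto\max(0,t)$ is $1$-Lipschitz, this yields $\|\bar A(\tilde x)\|_2\le\|\bar A(x_{\xi,\delta_\epsilon}^*)\|_2+\sqrt m\,G\sqrt{2\eta/\mu}$. Choosing $\eta$ to be the smaller of the two requirements—$\eta=\tilde\Theta(\mu\epsilon^2/(mG^2))$ for the constraint and $\eta=O(\sqrt m\,\xi\epsilon)$ for the objective—drives the transfer error below $\epsilon/2$, so that $\|\bar A(\tilde x)\|_2\le\epsilon$ and $F(\tilde x)-F^\ast\le\sqrt m\,\xi\epsilon$, i.e.\ $\tilde x$ is the desired $(\epsilon,\sqrt m\,\xi\epsilon)$-approximate solution in the $2$-norm. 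The essential point is that the required $\eta$ is only polynomially small in the data, so $\log(1/\eta)=O(\log(m/\epsilon)+\log(\mathrm{params}))$ enters only through the $\tilde O$.

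Finally I would do the complexity bookkeeping. With $\delta_\epsilon$ fixed, Proposition~\ref{prop:LSmoothforPenaltyReformulation} makes $F_{\xi,\delta_\epsilon}$ smooth with parameter $L_{\mathrm{tot}}=L_f+m\xi\bigl(L_a+\tfrac{C_1}{4}+\tfrac{C_0}{4\delta_\epsilon}\bigr)$, and substituting $\delta_\epsilon$ turns the last term into $\tilde\Theta(m^{3/2}\xi C_0/\epsilon)$. For Part~1 I invoke the $O(\sqrt{L_{\mathrm{tot}}/\mu}\,\log(1/\eta))$ bound for accelerated proximal full-gradient descent and split $L_{\mathrm{tot}}$ via $\sqrt{a+b}\le\sqrt a+\sqrt b$, producing exactly $\tilde O\bigl(\sqrt{(L_f+m\xi L_a+m\xi C_1)/\mu}+m^{3/4}\sqrt{\xi C_0/(\mu\epsilon)}\bigr)$. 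For Part~2 I write $F_{\xi,\delta_\epsilon}$ as a finite sum of $N=\ell+m$ terms, rescaling the $f_i$ by $N/\ell$ and the penalties by $N\xi$ as in Corollary~\ref{cor:Static Complexity convex}, whose average component smoothness is again $\bar L=L_f+m\xi\bigl(L_a+\tfrac{C_1}{4}+\tfrac{C_0}{4\delta_\epsilon}\bigr)$; feeding this into the catalyst-SVRG complexity $\tilde O\bigl(N+\sqrt{N\bar L/\mu}\bigr)$ and applying the same split reproduces the second stated bound. Everything outside the transfer step of Paragraph~3 is substitution plus the two cited complexity results.
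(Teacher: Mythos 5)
Your proposal is correct and follows essentially the same route as the paper: verify that $\delta_\epsilon$ lies in the admissible range of Theorem~\ref{thm:estimation nonlinear new z infty m 01} so that the exact minimizer has $O(\epsilon)$ constraint violation, transfer this to the approximate iterate $\tilde x$ via $\mu$-strong convexity combined with a gradient bound on the $a_i$ obtained from Assumption~\ref{assum:Assumption Nonlinear Constraints}.3 (with the required suboptimality $\eta$ entering only logarithmically), and then plug the smoothness constant of Proposition~\ref{prop:LSmoothforPenaltyReformulation} into the black-box complexities of the accelerated full-gradient and catalyst-SVRG methods using the same $(\ell+m)$-component rescaling. The only cosmetic difference is that you bound $\|\nabla a_i\|_2$ uniformly on a compact sublevel set, whereas the paper applies the growth condition directly at $x_{\xi,\delta_\epsilon}^*$; both serve the same purpose in the transfer step.
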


\begin{proof}
We only prove the 2nd claim as the proof for the 1st claim is similar. Since $\frac{1}{w}>2\log(\frac1w)$ for all \(w>0\), we have
\[\frac{w}{2\log(\frac1w)}\log\left(\tfrac{2\log(\frac1w)}w\right)\le w,\ \forall w\in(0,1).\]
Hence, for \(\delta_{\epsilon}=\frac{\epsilon}{4{\sqrt m}}\left(\log\left(\frac{2{\sqrt m}{U}\xi }{\mu\epsilon}\right)\right)^{-1}\), we have
\[2{\sqrt m}\delta_\epsilon\log\left(\frac{{U}\xi }{\mu\delta_{\epsilon}}\right)\le \epsilon.\]

To minimize \begin{equation}\label{eq:epsilon unconstrained problem}
    F_{\xi,\delta_{\epsilon}}(x)=\frac1{\ell}\sum_{i=1}^{\ell}f_i(x)+\xi\sum_{j=1}^m p_{\delta_{\epsilon}}(a_j(x))+\psi(x),
\end{equation} we consider \(\frac{{\ell}+{m}}{\ell} f_i(x)\), \(i=1,...,{\ell}\), and \(({\ell}+{m})\xi  p_{\delta_{\epsilon}}(a_j(x))\), \(j=1,...,{m}\) to be the ${\ell}+{m}$ components. Then, the problems are \(\mu\)-strongly convex, and consist of \({\ell}\) components that are \(\frac{{\ell}+{m}}{\ell}L_f\)-smooth, and \({m}\) components that are \((m+\ell)\xi(L_a+\frac{C_1}{4}+\frac{C_0}{4\delta})\)-smooth. Hence, if we apply the catalyst accelerated method to solve \eqref{eq:epsilon unconstrained problem} such that \(F_{\xi,\delta_{\epsilon}}(\tilde x)-F_{\xi,\delta_{\epsilon}}^* \le \Delta_{\epsilon}\), by Lemma C.1 of \cite{lin2015universal},
we can upper bound the expected number of incremental steps by 
\[
\tilde O\left(\ell + \sqrt{\frac{(\ell+m)(L_f+m\xi L_a + m\xi C_1)}{\mu}}+m^{\frac34}\sqrt{\frac{(\ell+m)\xi C_0}{\mu\epsilon}}\right).
\]
Here, \(\Delta_{\epsilon}\) is chosen to satisfy 
\[\begin{aligned}
&|a_i(\tilde x)-a_i(x_{\xi,\delta_{\epsilon}}^*)|\le \|\tilde x-x_{\xi,\delta_{\epsilon}}^*\|_2\|\nabla a_i(x_{\xi,\delta_{\epsilon}}^*)\|_2\\
&\le \sqrt{C_0+C_1|a_i(x_{\xi,\delta_{\epsilon}}^*)|}\|\tilde x-x_{\xi,\delta_{\epsilon}}^*\|_2\\
&\le \sqrt{C_0+C_1\sqrt{m}\delta\log\left(\frac{{U}\xi}{\mu\delta}\right)}\sqrt{\frac{\Delta_\epsilon}{\mu}}\\
&\le \frac \epsilon {2\sqrt m},
\end{aligned}\]
such that \(\|\bar A(\tilde x)\|_2\le \epsilon\)  by Theorem \ref{thm:estimation nonlinear new z infty m 01}.
The estimation for the function value gap is similar.

\end{proof}
Note that in the claims of Corollary~\ref{cor:Static Complexity-SVRG Catalyst Nonlinear}, the corresponding approximate solution has \(\epsilon_A=\epsilon\) and \(\epsilon_F=\sqrt{m}\xi\epsilon\) due to Theorem~\ref{thm:estimation nonlinear new z infty m 01} and Proposition~\ref{prop:function value gap nonlinear}. The factor of \(\xi\) comes from the equivalence of problem with scaling of constraints and dual variables, as \(\xi\) is related to the dual solutions.

The stochastic method based on the catalyst acceleration generally requires at least
\(\tilde O\left( \sqrt{{\ell}+{m}}\right)\) fewer evaluations on the components of the objective and the constraints and their gradients, 
compared to the accelerated full-gradient method, which indicates the advantage of applying stochastic methods over deterministic ones.

\begin{remark}\label{remark:linear comparison}
For linear constraints, i.e., \(L_a=C_1=0\), and w.l.o.g. \(C_0=1\), the corresponding complexity to achieve an \(\epsilon\) 2-norm violation of constraints in Corollary~\ref{cor:Static Complexity-SVRG Catalyst Nonlinear} is \(\tilde O\left(\ell + \sqrt{\frac{(\ell+m)L_f}{\mu}}+m^{\frac34}\sqrt{\frac{(\ell+m)\xi}{\mu\epsilon}}\right)\). Compared with the complexity to achieve an \(\epsilon\) error of solution in Corollary 3.7 of \cite{li2022new},  the third term is \(m^{\frac14}\) smaller, which is consistent with the difference of results in Theorem 2.5 and Proposition 2.8 in \cite{li2022new}. Despite this consistence, Corollary \ref{cor:Static Complexity-SVRG Catalyst Nonlinear} generalizes the complexity results to convex smoothly constrained problems as a simpler version.
\end{remark}
Corollary~\ref{cor:Static Complexity-SVRG Catalyst Nonlinear} is an extension of Corollary 3.7 in \cite{li2022new} to nonlinear constraints, showing that the penalty reformulation results are also applicable to problems with nonlinear constraints. Corollary 2 of \cite{mishchenko2018stochastic} establishes an \(O(\frac{1}{t^2})\) convergence rate for the objective value gap (with an additional projection) and \(O(\frac{1}{t^2})\) for the distance between the iterate to the feasible region in terms of the number of iterations \(t\) in the long run. Our Corollary \ref{cor:Static Complexity-SVRG Catalyst Nonlinear} establishes \(\tilde O(\frac{1}{t^2})\) and \(\tilde O(\frac{1}{t^2})\) convergence rates for the objective value gap and violations of constraints, which is similar to their results. Despite slight differences of the settings, we want to emphasize the following advantages of our results compared with theirs. First, our result has a better dependence on the Lipschitz constant of the objective and the initial condition. Second, their results are based on a global Hoffman-type assumption (Assumption 1), and their convergence rates depend on the corresponding Hoffman constant (\(\gamma\)), which are difficult to estimate and bound in practice, whereas our results only requires a bound on the norm of the dual solution. 
{Third, Theorem~\ref{thm:estimation nonlinear new z infty m 01} allows us to generalize other results for linear constraints, including the nested version of the penalty method (Algorithm 3.1 and Proposition 3.3), the construction of the dual solutions (Proposition 4.1), and the screening procedure (Algorithm 5.1 and Proposition 5.5) of \cite{li2022new}, to problems with nonlinear constraints. We leave these generalizations as future work.}
 
\subsection{Approximation of Non-Smooth Constraints}
\label{subsec:approx constraints}
In this subsection, we address the situation where the constraints \(a_i\) are non-smooth, and show how to apply our penalty reformulation to a suitably chosen smooth approximation. For example, for SDP problems, the constraint 
\(M(x)=M_0+\sum_{i=1}^n x_n M_n\preceq 0\), where \(M_i\in S^{k}\) are symmetric matrices, can be considered as a non-smooth nonlinear constraint \(\lambda_{\max}(M(x))\le 0\). One possible way to address such non-smooth problems is to form smooth approximations of the non-smooth constraints and solve a related smoothly constrained problem (e.g. for SDP problems consider Nesterov's smoothing technique \citet{nesterov2005smooth}). We consider applying our penalty reformulation, and associated gradient methods as developed in the previous section, to the resulting smoothly constrained problem. In particular, given smooth approximations \(\hat{a}_i\) of the non-smooth \(a_i\) functions, consider the two penalty reformulations: 
\[\begin{aligned}
F_{\xi,\delta}(x)& = F(x)+\xi\sum_{i=1}^mp_{\delta}(a_i(x)),\\
\hat F_{\xi,\delta}(x)& = F(x)+\xi\sum_{i=1}^mp_{\delta}(\hat a_i(x)),\\
\end{aligned}\] with optimal solutions \(x_{\xi,\delta}^*\) and \(\hat x_{\xi,\delta}^*\), respectively. %
\begin{remark}\label{prop:approx constraint}
Suppose the errors between \(a_i\) and \(\hat a_i\) are bounded as
\(|\hat a_i(\hat x_{\xi,\delta}^*)-a_i(\hat x_{\xi,\delta}^*)|\le \Delta\) and \(|\hat a_i(x^*)-a_i(x^*)|\le \Delta\), and we have an approximate solution \(\tilde x\), s.t. \(\hat F_{\xi,\delta}(\tilde x)-\hat F_{\xi,\delta}(\hat x_{\xi,\delta}^*)\le\xi\varepsilon\), {then \(\tilde x\) is an \(O(m\Delta+m\delta+\xi\epsilon)\)-approximate solution}. Since \[\begin{aligned}
&F(x^*)=F_{\xi,0}(x^*)\le F_{\xi,0}(\tilde x)\le F_{\xi,\delta}(\tilde x)\\ &\le \hat F_{\xi,\delta}(\tilde x) + m\xi\Delta\le  \hat F_{\xi,\delta}(\hat x_{\xi,\delta}^*)+m\xi\Delta +\xi\varepsilon\\ &\le \hat F_{\xi,\delta}(x^*)+m\xi\Delta+\xi\varepsilon\le F(x^*)+2m\xi\Delta+m\xi\delta\log2+\xi\varepsilon,    
\end{aligned}
\] when \(\xi\ge 2\bar \xi\), we have 
\[\begin{aligned}
F(\tilde x)-F(x^*)&\le\epsilon_1= m\xi\Delta+m\xi\delta\log2+\xi\varepsilon,\\ \|\bar A(\tilde x)\|_1&\le \epsilon_2=4m\Delta+2m\delta\log2+2\xi\varepsilon.
\end{aligned}\]
The second estimation comes from Proposition~\ref{prop:nonstrongly convex estimation}. Though we state Proposition~\ref{prop:nonstrongly convex estimation} for smooth constraints for consistency, it can be adapted for non-smooth constraints, as an application of convexity and KKT condition. In other words, \(\tilde x\) is an \(\left(\epsilon_1,\epsilon_2\right)\)-approximate solution with respect to the $1$-norm.
\end{remark}

Remark \ref{prop:approx constraint} enables us to solve the original problem \eqref{eq:Original Nonlinear} by solving the penalty reformulations of problems with smoothing constraints.

\section{Conclusion}
We construct penalty reformulations for strongly convex function minimization subject to nonlinear constraints and provide upper bounds on the objective value gap between the constrained problem and the penalized problem and on the violation of constraints. 
{These bounds lead to a penalty algorithm with
\(\tilde O\left(m^{\frac54}/\sqrt{\mu\epsilon}\right)\) 
expected stochastic gradient iterations to obtain an \(\epsilon\)-approximate solution for a strongly convex problem with $m$ constraints.}
Further applications, including problems with non-smooth constraints are also analyzed.
Using the bounds provided, further extensions of the penalty algorithm, including a nested version and the application of the screening procedure, can be incorporated into the algorithm, to achieve a better complexity in theory and practice, which we leave as future directions. %

\printcredits

\bibliographystyle{cas-model2-names}

\bibliography{Newbib}

\begin{thebibliography}{14}
\expandafter\ifx\csname natexlab\endcsname\relax\def\natexlab#1{#1}\fi
\providecommand{\url}[1]{\texttt{#1}}
\providecommand{\href}[2]{#2}
\providecommand{\path}[1]{#1}
\providecommand{\DOIprefix}{doi:}
\providecommand{\ArXivprefix}{arXiv:}
\providecommand{\URLprefix}{URL: }
\providecommand{\Pubmedprefix}{pmid:}
\providecommand{\doi}[1]{\href{http://dx.doi.org/#1}{\path{#1}}}
\providecommand{\Pubmed}[1]{\href{pmid:#1}{\path{#1}}}
\providecommand{\bibinfo}[2]{#2}
\ifx\xfnm\relax \def\xfnm[#1]{\unskip,\space#1}\fi
\bibitem[{Allen-Zhu(2017)}]{allen2017katyusha}
\bibinfo{author}{Allen-Zhu, Z.}, \bibinfo{year}{2017}.
\newblock \bibinfo{title}{Katyusha: The first direct acceleration of stochastic
  gradient methods}.
\newblock \bibinfo{journal}{The Journal of Machine Learning Research}
  \bibinfo{volume}{18}, \bibinfo{pages}{8194--8244}.
\bibitem[{Azagra(2013)}]{azagra2013global}
\bibinfo{author}{Azagra, D.}, \bibinfo{year}{2013}.
\newblock \bibinfo{title}{Global and fine approximation of convex functions}.
\newblock \bibinfo{journal}{Proceedings of the London Mathematical Society}
  \bibinfo{volume}{107}, \bibinfo{pages}{799--824}.
\bibitem[{Fercoq et~al.(2019)Fercoq, Alacaoglu, Necoara and
  Cevher}]{fercoq2019almost}
\bibinfo{author}{Fercoq, O.}, \bibinfo{author}{Alacaoglu, A.},
  \bibinfo{author}{Necoara, I.}, \bibinfo{author}{Cevher, V.},
  \bibinfo{year}{2019}.
\newblock \bibinfo{title}{Almost surely constrained convex optimization}, in:
  \bibinfo{booktitle}{International Conference on Machine Learning},
  \bibinfo{organization}{PMLR}. pp. \bibinfo{pages}{1910--1919}.
\bibitem[{G{\"u}ler et~al.(1995)G{\"u}ler, Hoffman and
  Rothblum}]{guler1995approximations}
\bibinfo{author}{G{\"u}ler, O.}, \bibinfo{author}{Hoffman, A.J.},
  \bibinfo{author}{Rothblum, U.G.}, \bibinfo{year}{1995}.
\newblock \bibinfo{title}{Approximations to solutions to systems of linear
  inequalities}.
\newblock \bibinfo{journal}{SIAM Journal on Matrix Analysis and Applications}
  \bibinfo{volume}{16}, \bibinfo{pages}{688--696}.
\bibitem[{Lan and Monteiro(2013)}]{lan2013iteration}
\bibinfo{author}{Lan, G.}, \bibinfo{author}{Monteiro, R.D.},
  \bibinfo{year}{2013}.
\newblock \bibinfo{title}{Iteration-complexity of first-order penalty methods
  for convex programming}.
\newblock \bibinfo{journal}{Mathematical Programming} \bibinfo{volume}{138},
  \bibinfo{pages}{115--139}.
\bibitem[{Lan and Zhou(2018)}]{lan2018optimal}
\bibinfo{author}{Lan, G.}, \bibinfo{author}{Zhou, Y.}, \bibinfo{year}{2018}.
\newblock \bibinfo{title}{An optimal randomized incremental gradient method}.
\newblock \bibinfo{journal}{Mathematical programming} \bibinfo{volume}{171},
  \bibinfo{pages}{167--215}.
\bibitem[{Li et~al.(2022)Li, Grigas and Atamt\"urk}]{li2022new}
\bibinfo{author}{Li, M.}, \bibinfo{author}{Grigas, P.},
  \bibinfo{author}{Atamt\"urk, A.}, \bibinfo{year}{2022}.
\newblock \bibinfo{title}{New penalized stochastic gradient methods for
  linearly constrained strongly convex optimization}.
\newblock \bibinfo{journal}{arXiv preprint arXiv:2202.07155} .
\bibitem[{Lin et~al.(2015)Lin, Mairal and Harchaoui}]{lin2015universal}
\bibinfo{author}{Lin, H.}, \bibinfo{author}{Mairal, J.},
  \bibinfo{author}{Harchaoui, Z.}, \bibinfo{year}{2015}.
\newblock \bibinfo{title}{A universal catalyst for first-order optimization}.
\newblock \bibinfo{journal}{arXiv preprint arXiv:1506.02186} .
\bibitem[{Lin et~al.(2018)Lin, Mairal and Harchaoui}]{lin2018catalyst}
\bibinfo{author}{Lin, H.}, \bibinfo{author}{Mairal, J.},
  \bibinfo{author}{Harchaoui, Z.}, \bibinfo{year}{2018}.
\newblock \bibinfo{title}{Catalyst acceleration for first-order convex
  optimization: from theory to practice}.
\newblock \bibinfo{journal}{Journal of Machine Learning Research}
  \bibinfo{volume}{18}, \bibinfo{pages}{7854--7907}.
\bibitem[{Mishchenko and Richt{\'a}rik(2018)}]{mishchenko2018stochastic}
\bibinfo{author}{Mishchenko, K.}, \bibinfo{author}{Richt{\'a}rik, P.},
  \bibinfo{year}{2018}.
\newblock \bibinfo{title}{A stochastic penalty model for convex and nonconvex
  optimization with big constraints}.
\newblock \bibinfo{journal}{arXiv preprint arXiv:1810.13387} .
\bibitem[{Nedi{\'c} and Tatarenko(2020)}]{nedic2020convergence}
\bibinfo{author}{Nedi{\'c}, A.}, \bibinfo{author}{Tatarenko, T.},
  \bibinfo{year}{2020}.
\newblock \bibinfo{title}{Convergence rate of a penalty method for strongly
  convex problems with linear constraints}, in: \bibinfo{booktitle}{2020 59th
  IEEE Conference on Decision and Control (CDC)}, \bibinfo{organization}{IEEE}.
  pp. \bibinfo{pages}{372--377}.
\bibitem[{Nesterov(2005)}]{nesterov2005smooth}
\bibinfo{author}{Nesterov, Y.}, \bibinfo{year}{2005}.
\newblock \bibinfo{title}{Smooth minimization of non-smooth functions}.
\newblock \bibinfo{journal}{Mathematical programming} \bibinfo{volume}{103},
  \bibinfo{pages}{127--152}.
\bibitem[{Sun and Freund(2004)}]{sun2004computation}
\bibinfo{author}{Sun, P.}, \bibinfo{author}{Freund, R.M.},
  \bibinfo{year}{2004}.
\newblock \bibinfo{title}{Computation of minimum-volume covering ellipsoids}.
\newblock \bibinfo{journal}{Operations Research} \bibinfo{volume}{52},
  \bibinfo{pages}{690--706}.
\bibitem[{Tatarenko and Nedich(2018)}]{tatarenko2018smooth}
\bibinfo{author}{Tatarenko, T.}, \bibinfo{author}{Nedich, A.},
  \bibinfo{year}{2018}.
\newblock \bibinfo{title}{A smooth inexact penalty reformulation of convex
  problems with linear constraints}.
\newblock \bibinfo{journal}{arXiv preprint arXiv:1808.07749} .

\end{thebibliography}

\end{document}